\title[Poincaré and log-Sobolev for singular Gibbs measures]%
{On Poincaré and logarithmic Sobolev inequalities\\ %
  for a class of singular Gibbs measures}
\author{Djalil Chafaï} 
\address[DC]{CEREMADE, CNRS UMR 7534, Université
  Paris-Dauphine, PSL, France.}
\email{\url{mailto:djalil(at)chafai.net}} %
\urladdr{\url{http://djalil.chafai.net/}}
\author{Joseph Lehec}
\address[JL]{CEREMADE, CNRS UMR 7534, Université
  Paris-Dauphine and DMA, CNRS UMR 8553, \'Ecole Normale Supérieure, France.}
\email{\url{mailto:lehec(at)ceremade.dauphine.fr}}
\urladdr{\url{https://www.ceremade.dauphine.fr/~lehec/}}
\date{Spring 2019. Submitted to Geometric Aspects of Functional Analysis (GAFA) Seminar notes.}
\newtheorem{theorem}{Theorem}[section]%
\newtheorem{corollary}[theorem]{Corollary}%
\newtheorem{lemma}[theorem]{Lemma}%
\theoremstyle{definition}
\newtheorem{remark}[theorem]{Remark}%
\newcommand{\LIP}[1]{\|#1\|_{\mathrm{Lip}}} 
\newcommand{\R}{\mathbb R}
\newcommand{\e}{\mathrm e}
\renewcommand{\d}{\mathrm d}
\newcommand{\E}{\mathbb E}
\def\@MRExtract#1 #2!{#1}     
\renewcommand{\MR}[1]{
  \xdef\@MRSTRIP{\@MRExtract#1 !}%
  \href{http://www.ams.org/mathscinet-getitem?mr=\@MRSTRIP}{MR-\@MRSTRIP}}
\numberwithin{equation}{section}
\keywords{Boltzmann--Gibbs measure; Gaussian unitary ensemble; Random
  matrix theory; Spectral analysis; Geometric functional analysis; Log-concave
  measure; Poincaré inequality; Logarithmic Sobolev inequality; Concentration
  of measure; Diffusion operator; Orthogonal polynomials.}
\subjclass[2000]{33C50 (pri.) and 39B62; 46N55; 46E35; 60B20; 60E15; 60J60}
\begin{document}

\begin{abstract}
  This note, mostly expository, is devoted to Poincaré and log-Sobolev
  inequalities for a class of Boltzmann--Gibbs measures with singular
  interaction. Such measures allow to model one-dimensional particles with
  confinement and singular pair interaction. The functional inequalities come
  from convexity. We prove and characterize optimality in the case of
  quadratic confinement via a factorization of the measure. This optimality
  phenomenon holds for all beta Hermite ensembles including the Gaussian
  unitary ensemble, a famous exactly solvable model of random matrix theory.
  We further explore exact solvability by reviewing the relation to
  Dyson--Ornstein--Uhlenbeck diffusion dynamics admitting the
  Hermite--Lassalle orthogonal polynomials as a complete set of
  eigenfunctions. We also discuss the consequence of the log-Sobolev
  inequality in terms of concentration of measure for Lipschitz functions such
  as maxima and linear statistics.
\end{abstract}

\maketitle

{\footnotesize\tableofcontents}

\section{Introduction}

The aim of this note is first to provide synthetic exposition gathering
material from several distant sources, and second to provide extensions and
novelty about optimality.

Let $n\in\{1,2,\ldots\}$. For a given $\rho\in \R$, we say that a function
$\phi \colon \R^n \to \R\cup\{+\infty\}$ is $\rho$-convex when
$x\mapsto \phi (x) - \rho \vert x \vert^2 /2$ is convex, where
$\vert x\vert:=\sqrt{x_1^2+\cdots+x_n^2}$ is the Euclidean norm. In particular
a $0$-convex function is just a convex function. An equivalent condition is
that for all $x,y\in\mathbb{R}^n$ and $\lambda \in [0,1]$,
\[
\phi ( (1-\lambda) x + \lambda y ) \leq (1-\lambda) \phi ( x) + \lambda \phi (y) 
- \frac{\rho \, \lambda(1-\lambda) } 2 \vert y - x \vert^2.
\]
If $\phi$ is $\mathcal C^2$-smooth on its domain then this is yet equivalent
to $\mathrm{Hess}(\phi) \geq \rho \, I_n$ as quadratic forms, pointwise, where
$I_n$ is the identity:
$\langle\mathrm{Hess}(\varphi)(x)y,y\rangle\geq\rho\vert x\vert^2$ for all
$x,y\in\R^n$.

Let $V:\mathbb{R}^n\to\mathbb{R}$ and
$W:\mathbb{R}\to\mathbb{R}\cup\{+\infty\}$ be two functions, called
``confinement potential'' and ``interaction potential'' respectively. We
assume that
\begin{itemize}
\item $V$ is $\rho$-convex for some $\rho>0$;
\item $W$ is convex with domain $(0;+\infty)$. In particular $W\equiv +\infty$
  on $(-\infty ; 0]$.
\end{itemize}
The energy of a configuration $x=(x_1,\ldots,x_n)\in\mathbb{R}^n$ is
\begin{equation*}\label{eq:U}
  U(x) =  V(x_1,\ldots,x_n) + \sum_{i<j}W(x_i-x_j)
  =V(x)+U_W(x)
  \in\mathbb{R}\cup\{+\infty\}.
\end{equation*}
The nature of $W$ gives that $U(x)$ is finite if and only if $x$ belongs to
the ``Weyl chamber''
\[
D=\{x\in\mathbb{R}^n:x_1>\cdots>x_n\}. 
\]
Assuming that 
\begin{equation*}\label{eq:Z}
  Z_{\mu}=\int_{\mathbb{R}^n}\mathrm{e}^{- U(x_1,\ldots,x_n)}\d x_1\cdots\d x_n
  <\infty
\end{equation*}
we define a probability measure $\mu$ on $\mathbb{R}^n$ by
\begin{equation}\label{eq:mun}
 \mu(\mathrm d x)
  =\frac{\mathrm{e}^{-U(x_1,\ldots,x_n)}}{Z_{\mu}}\d x.
\end{equation}
The support of $\mu$ is
$\overline{D}=\{x\in\mathbb{R}^n:x_1\geq\cdots\geq x_n\}$. Note that if
\begin{equation}\label{eq:Wlog}
W(u)=
\begin{cases}
- \beta\log u , & \text{if } u > 0 \\
+\infty & \text{otherwise}
\end{cases}
\end{equation}
where $\beta$ is a positive parameter, and if $X$ is a
random vector of $\mathbb{R}^n$ distributed according to $\mu$, then for every 
$\sigma>0$, the scaled random vector $\sigma X$ follows the law
$\mu$ with same $W$ but with $V$ replaced by $V(\cdot/\sigma)$.

Following Edelman \cite{MR1936554}, the beta Hermite ensemble corresponds to the case
\[
  V(x)=\frac{n}{2}\vert x\vert^2 = \frac n2 ( x_1^2 + \dotsb + x_n^2 ) ,
\]
and $W$ given by~\eqref{eq:Wlog}. In this case $\mu$ rewrites using a
Vandermonde determinant as
\begin{equation}\label{eq:BHE}
  \d\mu(x)
  =\frac{\mathrm{e}^{-\frac{n}{2}|x|^2}}{Z_{\mu}}\prod_{i<j}(x_i-x_j)^\beta 
\, \mathbbm 1_{\{x_1 \geq \dotsb \geq x_n \}} 
  \d x.
\end{equation}
The normalizing constant $Z_{\mu}$ can be explicitly computed in terms of
Gamma functions by reduction to a classical Selberg integral, but this is
useless for our purposes in this work. The Gaussian unitary ensemble (GUE) of
Dyson \cite{MR0177643} corresponds to $\beta=2$, namely
\begin{equation}\label{eq:GUE}
  \d\mu(x)
  =\frac{\mathrm{e}^{-\frac{n}{2}|x|^2}}{Z_{\mu}}\prod_{i<j}(x_i-x_j)^2
\mathbbm 1_{\{x_1 \geq \dotsb \geq x_n \}} 
  \d x.
\end{equation}
Note that on $\mathbb{R}^n$ the density of the beta Hermite ensemble
\eqref{eq:BHE} with respect to the Gaussian law
$\mathcal{N}(0,\frac{1}{n}I_n)$ is equal up to a multiplicative constant to
$\prod_{i<j}(x_i-x_j)^\beta$ times the indicator function of the Weyl chamber.
The cases $\beta=1$ and $\beta=4$ are known as the Gaussian orthogonal
ensemble (GOE) and the Gaussian simplectic ensemble (GSE).

Let $L^2(\mu)$ be the Lebesgue space of measurable functions from
$\mathbb{R}^n$ to $\mathbb{R}$ which are square integrable with respect to
$\mu$. Let $H^1(\mu)$ be the Sobolev space of functions in $L^2(\mu)$
with weak derivative in $L^2(\mu)$ in the sense of Schwartz distributions.

We provide in Section \ref{se:facts} some useful or beautiful facts about
\eqref{eq:mun}, \eqref{eq:BHE}, and \eqref{eq:GUE}.

\subsection{Functional inequalities and concentration of measure}

Given $f\in L^2 ( \mu)$ we define the variance of $f$ with respect to $\mu$ by
\[
\mathrm{var}_\mu ( f )  = \int_{\R^n} f^2 \, \mathrm d \mu 
- \left( \int_{\R^n} f \, \mathrm d \mu \right)^2.
\]
If additionally $f \geq 0$, then we define similarly the entropy of $f$ with respect to
$\mu$ by
\[
\mathrm{ent}_\mu (f) = 
\int_{\R^n}  f \log f \, \d \mu 
- \left( \int_{\R^n}  f \, \d \mu \right) \log\left( \int_{\R^n} f \, \d \mu \right).
\]
\begin{theorem}[Poincaré inequality]\label{th:PI}
  Let $\mu$ be as in \eqref{eq:mun}. For all $f\in H^1(\mu)$,
  \[
    \mathrm{var}_{\mu}(f) 
    \leq \frac{1}{\rho}\int_{\R^n} |\nabla f|^2 \, \d \mu .
  \]
  This holds in particular with $\rho=n$ for the beta Hermite ensemble
  \eqref{eq:BHE} for all $\beta>0$.
\end{theorem}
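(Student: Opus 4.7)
The main observation is that under the hypotheses the potential $U$ is $\rho$-convex on its effective domain, the open Weyl chamber $D$. Indeed, $V$ is $\rho$-convex by assumption, while the interaction term $U_W(x)=\sum_{i<j} W(x_i-x_j)$ is a sum of compositions of the convex function $W$ with the linear maps $x\mapsto x_i - x_j$; each summand is convex on $\R^n$ with effective domain the half-space $\{x_i>x_j\}$, so $U_W$ is convex on $\R^n$ with effective domain $D$. Summing, $U$ is $\rho$-convex on $\R^n$ (with the convention $U\equiv+\infty$ outside $D$), and $\mu$ is a $\rho$-log-concave probability measure supported on the closed convex set $\overline D$.

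For such measures the inequality is classical. Assuming for the moment that $U$ is smooth and strictly convex on $D$, one may invoke the Brascamp--Lieb inequality
\[
\mathrm{var}_{\mu}(f)\leq \int_{\R^n}\langle(\mathrm{Hess}\,U)^{-1}\nabla f,\nabla f\rangle\,\d\mu,
\]
and conclude using $\mathrm{Hess}(U)\geq \rho I_n$, so that $(\mathrm{Hess}\,U)^{-1}\leq \rho^{-1} I_n$. Equivalently, one can run the Bakry--Émery $\Gamma_2$ calculus for the overdamped Langevin diffusion on $D$ with generator $L=\Delta-\nabla U\cdot\nabla$ and reversible measure $\mu$: the bound $\mathrm{Hess}(U)\geq\rho I_n$ is exactly the curvature-dimension inequality $\Gamma_2\geq\rho\,\Gamma$, which yields the Poincaré inequality with constant $1/\rho$. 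No boundary contribution appears in the integration by parts because the density $\mathrm e^{-U}$ vanishes at $\partial D$.

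The main technical obstacle is that $W$, and therefore $U$, need not be smooth: $W$ may blow up at the boundary of its domain, as in the logarithmic case \eqref{eq:Wlog}, and it need not be $\mathcal C^2$ in the interior either. The standard remedy is to regularize, replacing $W$ by a sequence $W_\varepsilon$ of smooth convex approximations whose effective domains stay inside $(0,+\infty)$ (for instance by mollification inside $(0,+\infty)$ combined with a translation $u\mapsto u+\varepsilon$ ensuring integrability and an $\varepsilon$-quadratic perturbation to enforce strict convexity). Each $U_\varepsilon = V + \sum_{i<j} W_\varepsilon(x_i-x_j)$ is smooth and $\rho$-convex, so the previous step applies to the corresponding probability measure $\mu_\varepsilon$. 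One then passes to the limit $\varepsilon\to 0$ by dominated convergence, together with lower semicontinuity of variance and Dirichlet energy, after first testing on smooth functions compactly supported in $D$ and then extending to $H^1(\mu)$ by density. Finally, for the beta Hermite ensemble \eqref{eq:BHE} one has $V(x)=\tfrac n2|x|^2$ with $\mathrm{Hess}\,V=nI_n$, so $\rho=n$ is admissible.
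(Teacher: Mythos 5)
Your overall strategy matches the paper's: reduce to the observation that $U=V+U_W$ is $\rho$-convex (the paper's Lemma~\ref{le:conc}), then invoke Brascamp--Lieb or the Bakry--\'Emery $\mathrm{CD}(\rho,\infty)$ criterion for smooth strictly convex potentials, which are indeed the paper's first and third proofs. The genuine divergence, and the gap, is in the regularization step.

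You propose to mollify $W$ into smooth convex $W_\varepsilon$ with effective domain still contained in $(0,+\infty)$. But this does not remove the feature that actually causes trouble: the regularized measure $\mu_\varepsilon$ is still supported on (a subset of) $\overline{D}$, i.e.\ its potential is still $+\infty$ outside a proper convex region. The classical statements of Brascamp--Lieb and of the Bakry--\'Emery criterion that you want to quote assume a smooth potential that is finite on all of $\R^n$, so the measure has full support and the integration by parts carries no boundary term. Your justification that ``no boundary contribution appears because $\e^{-U}$ vanishes at $\partial D$'' is not valid in general: a convex $W$ with domain $(0,+\infty)$ need not blow up as $u\to 0^+$ (e.g.\ $W\equiv 0$ on $(0,\infty)$ is admissible), and even when $W$ does blow up, a mollified $W_\varepsilon$ may fail to; moreover, even a vanishing density does not by itself justify discarding boundary terms in the $\Gamma_2$ integration by parts without further control. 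The paper explicitly flags this: when $\mu$ lacks full support, the Bakry--\'Emery method requires reflecting boundary conditions, which ``poses some technical issues which are not always easy to overcome.''

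The paper resolves this cleanly with a \emph{different} regularization: it writes $\d\mu = f\,\d\gamma$ with $\gamma$ the Gaussian of potential $\rho|x|^2/2$, notes $\log f$ is concave, and applies the Ornstein--Uhlenbeck semigroup $Q_t$. Pr\'ekopa--Leindler shows $Q_t f$ is still log-concave, so $\mu Q_t$ has a $\rho$-convex potential that is \emph{finite and $\mathcal C^\infty$ on all of $\R^n$}, and $\mu Q_t\to\mu$ weakly as $t\to 0$. This eliminates the boundary altogether before invoking any of the smooth-case proofs, which is precisely what your mollification does not achieve. To fix your argument you would either need to adopt a regularization that enlarges the support to all of $\R^n$ (such as the paper's $Q_t$-smoothing, or a Moreau--Yosida type regularization of $W$ that makes it finite everywhere), or else supply a genuine treatment of the Neumann/reflecting boundary terms for log-concave measures on convex domains.
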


\begin{theorem}[Log-Sobolev inequality]\label{th:LSI}
  Let $\mu$ be as in \eqref{eq:mun}. For all $f\in H^1(\mu)$,
  \[
    \mathrm{ent}_\mu ( f^2 ) %
    \leq \frac{2}{\rho}\int_{\R^n} |\nabla f|^2 \, \d \mu.
  \]
  This holds in particular with $\rho=n$ for the beta Hermite ensemble
  \eqref{eq:BHE} for all $\beta>0$.
\end{theorem}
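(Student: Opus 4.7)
The key observation is that the potential $U=V+U_W$, viewed as an extended-real-valued function on $\R^n$, is $\rho$-convex: $V$ is $\rho$-convex by hypothesis, and $U_W(x)=\sum_{i<j}W(x_i-x_j)$ is convex as a sum of convex functions composed with linear forms. Thus $\mu$ is $\rho$-strongly log-concave, which matches exactly the Bakry--Émery curvature condition expected to yield a log-Sobolev inequality with constant $2/\rho$. The obstruction to direct application of the classical Bakry--Émery theorem is that $U$ is neither $C^2$-smooth nor even finite on all of $\R^n$: it is infinite outside the Weyl chamber $D$ and may blow up near $\partial D$.

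The plan is to proceed by smooth approximation. For $\varepsilon>0$, replace $W$ by a convex $C^\infty$ function $W_\varepsilon\colon\R\to\R$ that is finite on all of $\R$ and coincides with $W$ on $[2\varepsilon,+\infty)$; concretely, glue $W$ to a tangent line drawn at some point of $(0,2\varepsilon]$ and mollify slightly. The smoothed potential $U_\varepsilon(x)=V(x)+\sum_{i<j}W_\varepsilon(x_i-x_j)$ is then $C^2$-smooth and $\rho$-convex on $\R^n$, since the Hessian of $\sum_{i<j}W_\varepsilon(x_i-x_j)$ is a sum of rank-one nonnegative quadratic forms while $\mathrm{Hess}\,V\geq\rho I_n$ by hypothesis. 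Using that $V$ has at least quadratic growth, $\int\e^{-U_\varepsilon}\,\d x<\infty$, and the classical Bakry--Émery theorem applied to the probability measure $\mu_\varepsilon\propto\e^{-U_\varepsilon}\,\d x$ gives
\[
\mathrm{ent}_{\mu_\varepsilon}(f^2)\leq\frac{2}{\rho}\int_{\R^n}|\nabla f|^2\,\d\mu_\varepsilon
\]
for all $f\in H^1(\mu_\varepsilon)$.

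The final step is to pass to the limit $\varepsilon\to 0$. Arranging the family so that $W_\varepsilon\nearrow W$ pointwise on $\R$ (with $W\equiv+\infty$ on $(-\infty,0]$), monotone convergence yields $Z_{\mu_\varepsilon}\searrow Z_\mu$ and pointwise convergence of the densities, hence $\mu_\varepsilon\to\mu$ in total variation. For a bounded smooth function $f$ with bounded gradient whose support lies at positive distance from $\partial D$, dominated convergence transfers both sides of the LSI for $\mu_\varepsilon$ to the corresponding quantities for $\mu$, yielding the inequality at such $f$. A truncation-and-mollification density argument, exploiting $\mu(\partial D)=0$ and the smoothness of the density of $\mu$ on $D$, then extends the inequality to all $f\in H^1(\mu)$. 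The step I anticipate being most delicate is this density argument, where one must approximate $f\in H^1(\mu)$ by smooth functions supported away from the singular set $\partial D$ without losing control of $\int|\nabla f|^2\,\d\mu$; the alternative of exchanging this density step with a direct proof of LSI on $H^1(\mu_\varepsilon)$ at the level of test functions and then sending $\varepsilon\to 0$ is equally viable. For the beta Hermite ensemble, $V(x)=\tfrac{n}{2}|x|^2$ satisfies $\mathrm{Hess}\,V\equiv nI_n$, so $V$ is $n$-convex and the inequality holds with $\rho=n$ for every $\beta>0$.
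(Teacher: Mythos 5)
Your proof is essentially correct and uses the same key input as the paper (the potential $U=V+U_W$ is $\rho$-convex, exactly Lemma~\ref{le:conc}), followed by a regularization-then-limit strategy via Bakry--Émery. The difference is the choice of regularization: the paper smooths the \emph{measure} by running the Ornstein--Uhlenbeck semigroup, using Prékopa--Leindler to show $\mu Q_t$ still has a $\rho$-convex $\mathcal C^\infty$ potential on all of $\R^n$, whereas you smooth the \emph{interaction potential} $W$ by gluing on a tangent line and mollifying. Both work; the OU route is slightly slicker because $\mu Q_t$ is automatically a probability measure and converges weakly to $\mu$ with no need to track normalizing constants, while your construction requires the (correct but extra) check that $Z_{\mu_\varepsilon}<\infty$. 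The paper also records three further proofs of the smooth case (Brascamp--Lieb for Poincaré, Caffarelli's contraction, and Bobkov--Ledoux via Prékopa--Leindler) which you do not need.

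One point to tighten: in the passage to the limit you restrict to test functions supported at positive distance from $\partial D$, and then plan to conclude by showing such functions are dense in $H^1(\mu)$. That density is not obvious and is potentially delicate for small $\beta$ (it amounts to a zero-capacity statement for $\partial D$ relative to $\mu$). Fortunately the restriction is unnecessary: since $\mu_\varepsilon$ has full support and $\mu_\varepsilon\to\mu$ in total variation, both sides of the log-Sobolev inequality for $\mu_\varepsilon$ pass to the limit for \emph{any} bounded Lipschitz $f$ on $\R^n$, with no support condition. One then invokes only the standard density of bounded Lipschitz functions in $H^1(\mu)$ (truncate and mollify in $\R^n$), which is unproblematic because $\mu$ is log-concave with support $\overline D$ a convex body with Lipschitz boundary. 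Phrasing the limit step this way removes the one genuinely uncertain point in your argument.
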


\begin{theorem}[Optimality for Poincaré and log-Sobolev
  inequalities]\label{th:opt}
  Let $\mu$ be as in \eqref{eq:mun}. Assume that $V$ is quadratic:
  $V(x)= \rho\vert x\vert^2/2$ for some $\rho>0$. This is in particular the
  case for the beta Hermite ensemble \eqref{eq:BHE} for all $\beta>0$. Then
  equality is achieved in the Poincaré inequality of Theorem \ref{th:PI} for
  \[
    f:x\in\mathbb{R}^n\mapsto\lambda(x_1+\cdots+x_n)+c, %
    \quad \lambda,c\in\mathbb{R}. 
  \]
  Moreover equality is achieved in the logarithmic Sobolev inequality of Theorem
  \ref{th:LSI} for
  \[
    f:x\in\mathbb{R}^n\mapsto\mathrm{e}^{\lambda(x_1+\cdots+x_n)+c}, %
    \quad\lambda,c\in\mathbb{R}.
  \]
  Lastly, in both cases these are the only extremal functions.   
\end{theorem}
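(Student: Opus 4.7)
The strategy is a factorization of $\mu$ into a one-dimensional Gaussian times a remainder, obtained by isolating the center of mass. Let $e = n^{-1/2}(1,\ldots,1) \in \R^n$ and decompose $x = s e + y$ with $s = \langle x, e\rangle$ and $y \in e^\perp$. Since each $W(x_i - x_j)$ is invariant under the translation $x \mapsto x + te$, the interaction $U_W$ depends only on $y$; together with $V(x) = (\rho/2)s^2 + (\rho/2)|y|^2$, this gives $U(x) = (\rho/2)s^2 + \Phi_\nu(y)$ with $\Phi_\nu(y) = (\rho/2)|y|^2 + U_W(y)$. Hence $\mu = \gamma \otimes \nu$, where $\gamma = \mathcal N(0,1/\rho)$ on $\R e$ and $\nu$ is the $\rho$-log-concave probability on $e^\perp$ with density proportional to $\mathrm{e}^{-\Phi_\nu}$. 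The displayed candidate functions depend only on $s$: both $\lambda(x_1+\cdots+x_n)+c = \lambda\sqrt{n}\,s+c$ and $\mathrm{e}^{\lambda(x_1+\cdots+x_n)+c}$ reduce to affine, respectively exponential-of-affine, functions of $s$; the classical one-dimensional Gaussian Poincaré and log-Sobolev sharp cases applied to $\gamma$ then give the desired equalities on $\mu$, the $\nu$-slot being trivial.

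For uniqueness I would use the tensorization identities. For the Poincaré case,
\[
\mathrm{var}_\mu(f) = \int \mathrm{var}_\gamma(f(\cdot, y))\,\d\nu(y) + \mathrm{var}_\nu(g), \qquad g(y) = \int f(s,y)\,\d\gamma(s),
\]
coupled with Gaussian Poincaré on $s$-slices, the $\nu$-Poincaré applied to $g$, and the Cauchy--Schwarz bound $|\nabla_y g|^2 \leq \int |\nabla_y f|^2\,\d\gamma$, equality in the $\mu$-Poincaré forces (i) $f(s,y) = A(y)s + B(y)$ on $\gamma$-slices (Gaussian equality), (ii) $\nabla A \equiv 0$ $\nu$-a.e.\ (Cauchy--Schwarz equality), and (iii) $B = g$ saturates the $\nu$-Poincaré inequality. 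The log-Sobolev case is parallel, using entropy tensorization, the Gaussian log-Sobolev extremal characterization, and the analogous Cauchy--Schwarz step on $\nabla h / \sqrt{h}$ with $h(y) = \int f^2\,\d\gamma$, reducing to $f(s,y) = C(y)\mathrm{e}^{\lambda s}$ with $\lambda$ constant and $C$ saturating $\nu$-log-Sobolev.

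The main obstacle is to show that the only extremals of Poincaré and log-Sobolev for $\nu$ are the constants. For Poincaré, an extremal $\varphi$ is an eigenfunction of $L_\nu = \Delta - \nabla\Phi_\nu\cdot\nabla$ with eigenvalue $-\rho$; Bochner's formula integrated against $\nu$ yields
\[
0 = \int |\mathrm{Hess}\,\varphi|^2\,\d\nu + \int \langle(\mathrm{Hess}\,\Phi_\nu - \rho I)\nabla\varphi,\nabla\varphi\rangle\,\d\nu.
\]
Both non-negative terms must vanish: $\varphi$ is affine with constant gradient $a \in e^\perp$, and $\sum_{i<j} W''(y_i - y_j)(a_i - a_j)^2 = 0$ for $\nu$-a.e.\ $y$. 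Under strict convexity of $W$ on $(0,+\infty)$ (which holds for the beta Hermite case $W(u) = -\beta\log u$, since then $W''(u) = \beta/u^2 > 0$), this forces $a_i = a_j$ for every pair and, combined with $a \in e^\perp$, gives $a = 0$; hence $\varphi$ is constant. The log-Sobolev case is settled by the analogous $\Gamma_2$ saturation argument applied to $\log\varphi$: equality forces $\mathrm{Hess}(\log\varphi) = 0$ and $\langle (\mathrm{Hess}\,\Phi_\nu - \rho I)\nabla\log\varphi,\nabla\log\varphi\rangle = 0$, so $\log\varphi$ is affine with gradient $a\in e^\perp$ satisfying the same vanishing, and the same conclusion applies.
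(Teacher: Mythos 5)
Your factorization $\mu = \gamma \otimes \nu$ and the resulting computation of equality for the claimed functions is exactly the paper's Lemma~\ref{le:factor} and the first half of its proof of Theorem~\ref{th:opt}; this part is correct and matches the paper.

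For uniqueness you take a genuinely different route: tensorize variance/entropy to reduce the question to the uniqueness of extremals for $\nu$, then run a $\Gamma_2$/Bochner saturation argument on $\nu$. Two things prevent this from closing the proof as stated. First, your conclusion ``$\sum_{i<j} W''(y_i-y_j)(a_i-a_j)^2 = 0 \Rightarrow a_i=a_j$'' uses strict convexity of $W$, which you explicitly invoke, but the theorem only assumes $W$ convex with domain $(0,\infty)$. If $W$ has a flat piece ($W''\equiv 0$ on an open interval, which is compatible with the hypotheses and with finiteness of $Z_\mu$), the Bochner identity alone says nothing about $a$, and your argument stalls. Second, the integrated Bochner identity for $\nu$ is applied on the cone $\overline D\cap e^\perp$, whose boundary carries a Neumann/reflection condition and near which the potential $U_W$ is singular; the identity therefore picks up boundary terms and requires regularity of the extremal $\varphi$ that is not free. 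The paper flags exactly this issue for its own Bakry--Émery variant (Remark~\ref{rm:optimality}) and for this reason proves the equality characterization instead through Caffarelli's contraction theorem (Lemma~\ref{le:courtade}): an extremal $f$ must be affine, $f(x)=\langle v,x\rangle+b$, and $\mu$ must have a Gaussian factor along $v$; since $\mathrm{supp}(\mu)=\overline D=\{x_1\geq\cdots\geq x_n\}$ is translation-invariant only along the diagonal, $v$ must be proportional to $u$. That support argument needs no strict convexity and no boundary regularity. You could repair your proposal in the same spirit: the support of $\nu$ is the pointed cone $\overline D\cap e^\perp$, which has trivial lineality, so $\nu$ cannot have a Gaussian factor in any direction and hence (by Lemma~\ref{le:courtade} applied to $\nu$) has no non-constant Poincaré or log-Sobolev extremals; this replaces the strict-convexity/Bochner step and closes the gap.
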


Theorems \ref{th:PI} and \ref{th:LSI} are proved in Section
\ref{ss:proof:pilsi} and Theorem \ref{th:opt} in Section \ref{ss:proof:opt}.

Poincaré and logarithmic Sobolev inequalities for beta ensembles are already
known in the literature about random matrix theory, see for instance
\cite{MR2760897,MR3699468} and references therein. However the optimality that
we point out here seems to be new.

The following corollary of Theorem \ref{th:LSI} provides concentration of
measure around the mean for Lipschitz functions, including linear statistics
and maximum.

\begin{corollary}[Gaussian concentration inequality for Lipschitz
  functions]\label{co:conc}
  Let $\mu$ be as in \eqref{eq:mun}. For every Lipschitz function
  $F:\mathbb{R}^n\to\mathbb{R}$ and for all real parameter $r>0$,
  \begin{equation}\label{eq:conc}
    \mu\left(\left|F-\int F\d\mu\right|\geq r\right)
    \leq2\exp\left(-\frac{\rho}{\LIP{F}^2}\frac{r^2}{2}\right).
  \end{equation}
  In particular for any measurable $f:\mathbb{R}\to\mathbb{R}$ and all $r>0$,
  with $L_n(f)(x)=\frac{1}{n}\sum_{i=1}^nf(x_i)$,
  \begin{equation}\label{eq:conclinstat}
    \mu
    \left(\left|L_n(f)-\int L_n(f)\d\mu\right|\geq r\right) %
    \leq2\exp\left(-n\frac{\rho}{\LIP{f}^2}\frac{r^2}{2}\right).
  \end{equation}
  Additionally, for all $r>0$,
  \begin{equation}\label{eq:concmax}
	  \mu\left(\Bigr|x_1-\int x_1\mu(\d x)\Bigr|\geq r\right)
    \leq2\exp\left(-\rho\frac{r^2}{2}\right).
  \end{equation}
  This holds in particular with $\rho=n$ for the beta Hermite ensemble
  \eqref{eq:BHE} for all $\beta>0$.
\end{corollary}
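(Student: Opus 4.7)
The plan is to derive \eqref{eq:conc} from the log-Sobolev inequality of Theorem \ref{th:LSI} via the classical Herbst argument, and then to obtain \eqref{eq:conclinstat} and \eqref{eq:concmax} as special cases by computing the Lipschitz constants of $L_n(f)$ and of the coordinate map $x\mapsto x_1$. The final claim, that $\rho=n$ for the beta Hermite ensemble, is then a direct application of the particular case already recorded in Theorem \ref{th:LSI}.

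For the Herbst step, I would fix a Lipschitz function $F$ with $L=\LIP{F}$ and apply Theorem \ref{th:LSI} to $f=\mathrm{e}^{tF/2}$ for real $t$. Since $f^2=\mathrm{e}^{tF}$ and $\nabla f=(t/2)(\nabla F)\mathrm{e}^{tF/2}$, with $|\nabla F|\leq L$ almost everywhere by Rademacher's theorem, the right-hand side of the log-Sobolev inequality is at most $(t^2L^2/(2\rho))\int \mathrm{e}^{tF}\,\mathrm{d}\mu$. Writing $H(t)=\int \mathrm{e}^{tF}\,\mathrm{d}\mu$, the resulting differential inequality reads
\[
tH'(t)-H(t)\log H(t)\leq \frac{t^2L^2}{2\rho}\,H(t).
\]
Dividing by $t^2H(t)$ recognises the left-hand side as the derivative of $t\mapsto t^{-1}\log H(t)$ and shows that this derivative is bounded by $L^2/(2\rho)$; since the limit at $0^+$ equals $\int F\,\mathrm{d}\mu$, integration yields the Gaussian moment bound
\[
\log H(t)\leq t\int F\,\mathrm{d}\mu+\frac{t^2L^2}{2\rho},\qquad t\geq 0.
\]
A Chernoff bound optimised at $t=r\rho/L^2$ then gives the one-sided version of \eqref{eq:conc}, and applying the same estimate to $-F$ with a union bound produces the factor $2$.

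Once \eqref{eq:conc} is in hand, the two corollaries follow quickly. For linear statistics, Cauchy--Schwarz gives
\[
|L_n(f)(x)-L_n(f)(y)|\leq \frac{\LIP{f}}{n}\sum_{i=1}^n|x_i-y_i|\leq \frac{\LIP{f}}{\sqrt{n}}|x-y|,
\]
hence $\LIP{L_n(f)}\leq \LIP{f}/\sqrt{n}$, and inserting this constant into \eqref{eq:conc} produces \eqref{eq:conclinstat}. For the extreme coordinate, the projection $x\mapsto x_1$ is $1$-Lipschitz on $\R^n$, so \eqref{eq:conc} with $L=1$ is exactly \eqref{eq:concmax}; note that the support of $\mu$ is contained in $\overline{D}$, on which $x_1$ coincides with $\max_i x_i$.

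The main technical obstacle is the regularity required to apply Theorem \ref{th:LSI}: the function $\mathrm{e}^{tF/2}$ need not lie in $H^1(\mu)$ when $F$ is only Lipschitz and unbounded. I would handle this in the standard way by first truncating $F_M=(-M)\vee F\wedge M$, which is still $L$-Lipschitz and now bounded, running the Herbst argument for $F_M$, and finally letting $M\to\infty$ using monotone convergence for $H(t)$ and Fatou for the tail bound.
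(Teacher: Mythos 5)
Your proposal is correct and follows essentially the same route as the paper: the Herbst argument deriving the Gaussian Laplace transform bound from Theorem \ref{th:LSI}, followed by the elementary Lipschitz-constant computations for $L_n(f)$ and the first coordinate. The paper is more telegraphic about the Herbst step (it states the Laplace bound and cites a reference, and also mentions the alternative derivation via the intermediate Pr\'ekopa--Leindler inequality \eqref{eq:bobkovledoux}), whereas you carry it out in full, including the truncation needed for regularity; but the underlying argument is identical.
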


The proof of Corollary \ref{co:conc} and some additional comments are given in
Section \ref{ss:proof:conc}.

The scale in \eqref{eq:concmax} is not optimal for the beta Hermite ensemble,
the largest particle is actually more concentrated than what is predicted by
Corollary~\ref{co:conc}. Indeed, it is proved for instance in \cite{MR2813333}
that $n^{2/3} (\lambda_1 - 2)$ converges in law as $n$ tends to infinity to a
Tracy--Widom distribution of parameter $\beta$. In particular fluctuations of
$\lambda_1$ are of order $n^{-2/3}$, whereas~\eqref{eq:concmax} only predicts
an upper bound of order $n^{-1/2}$. See also
\cite{MR2126983,MR2053053,MR2678393} for a concentration property that matches
the correct order of fluctuations.

Note also that \eqref{eq:conclinstat} allows to get concentration for the
Cauchy--Stieltjes transform of the empirical distribution by taking $f$ equal
to the real or imaginary part of $x\mapsto 1/(x-z)$ where $z=a+\mathrm{i}b$ is
a fixed complex parameter with $b>0$.

The function $(x_1,\ldots,x_n)\mapsto L_n(f)(x)=\frac{1}{n}\sum_{i=1}^nf(x_i)$
is called a linear statistics. 
The inequality \eqref{eq:conclinstat} appears for the spectrum of random
matrix models in the work of Guionnet and Zeitouni \cite{MR1781846} via the
logarithmic Sobolev inequality, see also \cite{MR2498298} and \cite[Section
4.4.2]{MR2760897}, and \cite[Exercise\ 4.4.33]{MR2760897} for beta ensembles.
The exponential speed $n^2$ in \eqref{eq:conclinstat} is optimal according to
the large deviation principle satisfied by $L_n$ under $\mu$ established by
Ben Arous and Guionnet \cite{MR1465640} for the GUE, see \cite{MR3262506} and
references therein for the general case \eqref{eq:mun}. 
Concentration inequalities
and logarithmic Sobolev inequalities for spectra of some random matrix models
at the correct scale can also be obtained using coupling methods or exact
decompositions, see for instance \cite{MR3055255,MR3109633} and references
therein.

Many proofs involve the following simple transportation facts:
\[
  \mathcal{N}(0,n^{-1}I_n)
  \overset{\mathrm{Caffarelli}}{\xrightarrow{\hspace*{4em}}}
  \mu
  \overset{x_1+\cdots+x_n}{\xrightarrow{\hspace*{4em}}}
  \mathcal{N}(0,1)
\]
and 
\[
  \mathcal{N}(0,n^{-1}I_n) 
  \overset{x_1+\cdots+x_n}{\xrightarrow{\hspace*{4em}}}
  \mathcal{N}(0,1)
\]
and
\[
  \mathrm{Law}(H)
  \overset{\mathrm{Spectrum}}{\xrightarrow{\hspace*{4em}}}
  \mu
  \overset{x_1+\cdots+x_n}{\xrightarrow{\hspace*{4em}}}
  \mathcal{N}(0,1)
\]   
and
\[
  \mathrm{Law}(H)
  \overset{\mathrm{Trace}}{\xrightarrow{\hspace*{4em}}}
  \mathcal{N}(0,1)
\]
where $H$ is a random Hermitian matrix as in Theorem \ref{th:GUEM} or
Theorem \ref{th:BHEM}.

\subsection{Dynamics}
\label{se:dynamics}

Let us assume in this section that the functions $V$ and $W$ are smooth on
$\R^n$ and $(0,+\infty)$ respectively. Then the energy $U$ is smooth on its
domain $D$. Fix $X_0\in D$ and consider the overdamped Langevin diffusion
associated to the potential $U$ starting from $X_0$, solving the stochastic
differential equation
\begin{equation}\label{eq:SDE}
  X_t = X_0 + \sqrt{2} B_t- \int_0^t \nabla U(X_s)\, \d s + \Phi_t,
  \ t\geq0,
\end{equation}
where ${(B_t)}_{t\geq0}$ is a standard Brownian Motion of $\mathbb{R}^n$, and
where $\Phi_t$ is a reflection at the boundary of $D$ which constrains the
process $X$ to stay in $D$. More precisely
\[
  \Phi_t = - \int_0^t\mathrm{n}_s\, L(\d s)
\]
where $L$ is a random measure depending on $X$ and supported on
$\{t \geq 0 \colon X_t \in \partial D\}$ and where
$\mathrm{n}_t$ is an outer unit normal to the boundary of $D$ at $X_t$ for
every $t$ in the support of $L$. The process $L$ is called the 
``local time'' at the boundary of $D$. The stochastic differential equation
\eqref{eq:SDE} writes equivalently
\[
  \d X_t=\sqrt{2}\, \d B_t-\nabla U(X_t)\,\d t -\mathrm{n}_t \, L (\d t).
\]
It is not obvious that equation~\eqref{eq:SDE} admits a solution. Such
diffusions with reflecting boundary conditions were first considered by
Tanaka. He proved in \cite{MR529332} that if $\nabla U$ is globally Lipschitz
on $D$ and grows at most linearly at infinity then~\eqref{eq:SDE} does admit a
unique strong solution.

If it exists, the solution is a Markov process. Its generator is the operator
$G$ where
\begin{equation}\label{eq:diff}
  \mathrm{G}
  =\Delta-\langle\nabla U,\nabla\rangle
  =\sum_{i=1}^n\partial^2_{x_i}
  -\sum_{i=1}^n (\partial_{x_i}V)(x)\partial_{x_i}
  -\sum_{i\neq j} W'(x_i-x_j) \partial_{x_i}
\end{equation}  
with Neumann boundary conditions at the boundary of $D$.
Stokes formula then shows that $\mathrm{G}$ is
symmetric in $L^2 (\mu)$. As a result the measure $\mu$ is reversible for the
process $(X_t)$. By integration by parts the density $f_t$ of $X_t$ with
respect to the Lebesgue measure satisfies the Fokker--Planck equation
$\partial_tf_t=\Delta f_t+\mathrm{div}(f_t\nabla U)$.

It is common to denote $X_t=(X_t^1,\ldots,X_t^n)$ and to interpret
$X_t^1,\ldots,X_t^n$ as interacting particles on the real line experiencing
confinement and pairwise interactions. Let us discuss now the particular case
of the beta Hermite ensemble \eqref{eq:BHE}, for which \eqref{eq:SDE} rewrites
\begin{equation}\label{eq:SDE2}
  \d X_t^i = \sqrt{2} \,\d B_t^i - nX_t^i\d t 
  + \beta\sum_{j\colon j\neq i} \frac 1 {X^i_t - X^j_t} \,\d t  
  , \quad 1\leq i \leq n 
\end{equation}
as long as the particles have not collided. We call this diffusion the
Dyson--Ornstein--Uhlenbeck process. Without the confinement term
$-X_t^i\, \d t$ this diffusion is known in the literature as the Dyson
Brownian motion. Indeed Dyson proved in \cite{MR0148397} the following
remarkable fact: if $(M_t)$ is an Ornstein--Uhlenbeck process taking values in
the space of complex Hermitian matrices then the eigenvalues of $(M_t)$ follow
the diffusion~\eqref{eq:SDE2} with parameter $\beta = 2$, while if $(M_t)$ is
an Ornstein--Uhlenbeck process taking values in the space of real symmetric
matrices then the same holds true with $\beta = 1$. Dyson also proved an
analogue result for the eigenvalues of a Brownian motion on the unitary group.
It is natural to ask whether the repulsion term $1 / (X_t^i - X_t^j)$ is
strong enough to actually prevent the collision of particles. This was
investigated by Rogers and Shi in \cite{MR1217451}, see also \cite{MR2760897}.
They proved that if $\beta \geq 1$ then there are no collisions:
\eqref{eq:SDE2} admits a unique strong solution and with probability $1$, the
process $(X_t)$ stays in the Weyl chamber $D$ for all time. This means that in
that case, Tanaka's equation~\eqref{eq:SDE} does admit a unique strong
solution, but the reflection at the boundary $\Phi_t$ is actually identically
$0$. This critical phenomenon was also observed twenty five years ago by
Calogero in \cite{MR0280103}.
Besides, although it is not explicetly written in Rogers and Shi's article, 
when $\beta < 1$ collisions do occur in finite time, so
that the reflection $\Phi_t$ enters the picture. In that case though, the
existence of a process $(X_t)$ satisfying~\eqref{eq:SDE} does not follow from
Tanaka's theorem~\cite{MR529332}, as the potential $U$ is singular at the
boundary of $D$. Still~\eqref{eq:SDE} does admit a unique strong solution.
Indeed, this was established by Cépa and Lépingle in \cite{MR1440140} using an
existence result for multivalued stochastic differential equations due to
Cépa~\cite{MR1459451}. See also the work of Demni \cite{MR2470522,demni}.

\emph{Long time behavior of the dynamics.} Let us assume that the
process~\eqref{eq:SDE} is well defined. We denote by $(P_t)$ the associated
semigroup: For every test function $f$
\[
P_t f (x)  = \E(f ( X_t ) \mid X_0 = x).
\]
Given a probability measure $\nu$ on $\R^n$ we denote $\nu P_t$ the law of the
process at time $t$ when initiated from $\nu$. Recall that the measure $\mu$
is stationary: $\mu P_t = \mu$ for all time. For all real number $p\geq 1$,
the $L^p$ Kantorovich or Wasserstein distance between $\mu$ and $\nu$ is
\begin{equation}\label{eq:Wp}
  \mathrm{W}_p ( \nu , \mu ) 
  = \inf_{\substack{(X,Y)\\X\sim\nu\\Y\sim\mu}}\E(\vert X - Y \vert^p)^{1/p}.
\end{equation}
Note that $\mathrm{W}_p(\nu,\mu)<\infty$ if
$\left|\cdot\right|^p\in L^1(\nu)\cap L^1(\mu)$. It can be shown that the
convergence for $\mathrm{W}_p$ is equivalent to weak convergence together with
convergence of $p$-th moment. If $\nu$ has density $f$ with respect to $\mu$, the
relative entropy of $\nu$ with respect to $\mu$ is
\begin{equation}\label{eq:H}
  \mathrm H ( \nu \mid \mu )  = \int_{\R^n} \log f \, d\nu .
\end{equation}
If $\nu$ is not absolutely continuous we set
$\mathrm H ( \nu \mid \mu ) = +\infty$ by convention.

\begin{theorem}[Convergence to equilibrium]\label{th:equilib}
  For any two probability measures $\nu_0, \nu_1$ on $\R^n$ we have, for all
  $p\geq1$ and $t\geq0$, in $[0,+\infty]$,
  \[
    \mathrm{W}_p ( \nu_0 P_t , \nu_1 P_t ) %
    \leq \e^{- \rho t} \, \mathrm{W}_p ( \nu_0 , \nu_1 ) . 
  \]
  In particular, choosing $\nu_1 = \mu$ yields  
  \[
    \mathrm{W}_p ( \nu_0 P_t , \mu ) %
    \leq \e^{- \rho t} \, \mathrm{W}_p ( \nu_0 , \mu ).
  \]
  Moreover we also have, for all $t\geq0$,
  \begin{equation}\label{eq:relent}
    \mathrm H ( \nu_0 P_t \mid \mu ) %
    \leq \e^{-\rho t }\, \mathrm H ( \nu_0 \mid \mu ).
  \end{equation}
\end{theorem}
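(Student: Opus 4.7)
The plan is to establish the two assertions by different but classical methods adapted to the reflected dynamics. The Wasserstein contraction will come from a synchronous coupling that exploits the $\rho$-convexity of $U$ on the convex Weyl chamber $\overline{D}$, while the relative entropy decay will come from the log-Sobolev inequality of Theorem~\ref{th:LSI} applied along the semigroup in the usual De~Bruijn style. In both cases it suffices to treat initial laws with finite $p$-th moment, respectively finite relative entropy, the remaining cases being trivial.

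For the Wasserstein bound, I would introduce two solutions $(X_t)$ and $(Y_t)$ of the reflected SDE~\eqref{eq:SDE} driven by the \emph{same} Brownian motion $(B_t)$, with $(X_0,Y_0)$ distributed according to an optimal $\mathrm W_p$-coupling of $(\nu_0,\nu_1)$. The Brownian parts cancel in the difference, so writing $Z_t = X_t - Y_t$ there is no Itô correction and
\[
\tfrac12\, \d |Z_t|^2 = -\langle \nabla U(X_t) - \nabla U(Y_t), Z_t \rangle\,\d t - \langle \mathrm n_t^X, Z_t \rangle\, L^X(\d t) + \langle \mathrm n_t^Y, Z_t \rangle\, L^Y(\d t).
\]
The drift term is at most $-\rho\,|Z_t|^2\,\d t$, since $U = V + U_W$ inherits $\rho$-convexity on $D$ from the $\rho$-convexity of $V$ and the convexity of $W$. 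Each reflection term is non-positive thanks to the convexity of the cone $D$: the measure $L^X$ is supported on $\{X_t \in \partial D\}$, where $\mathrm n_t^X$ is an outer normal, so $\langle \mathrm n_t^X, Y_t - X_t \rangle \leq 0$ because $Y_t \in \overline D$; the symmetric statement holds for~$Y$. Gronwall then gives $|Z_t| \leq \e^{-\rho t}|Z_0|$ almost surely, and taking $L^p$ norms followed by the infimum over initial couplings yields the announced inequality.

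For the relative entropy, let $f_t$ denote the density of $\nu_0 P_t$ with respect to $\mu$; by reversibility it solves $\partial_t f_t = \mathrm G f_t$. The standard integration by parts against the Dirichlet form of $\mathrm G$ on $L^2(\mu)$ gives
\[
\frac{\d}{\d t}\,\mathrm H(\nu_0 P_t \mid \mu) = \int \log(f_t)\, \mathrm G f_t \,\d \mu = -\int \frac{|\nabla f_t|^2}{f_t}\,\d\mu = -4\int |\nabla \sqrt{f_t}|^2\,\d\mu.
\]
Applying Theorem~\ref{th:LSI} to the function $\sqrt{f_t}$ shows that the right-hand side is bounded above by $-2\rho\,\mathrm H(\nu_0 P_t \mid \mu)$, and Gronwall yields exponential decay at the rate announced in~\eqref{eq:relent}. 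A routine approximation of $\nu_0$ (e.g. convolution with a tiny Gaussian followed by truncation) together with the lower semicontinuity of $\mathrm H(\cdot\mid\mu)$ legitimates these formal manipulations when $f_0$ is not smooth.

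The main delicate point is justifying the Itô/Tanaka calculus for $|X_t - Y_t|^2$ in the presence of the singular potential $W$ and the local times $L^X$, $L^Y$. For $\beta \geq 1$ in the beta Hermite case, the Rogers--Shi result~\cite{MR1217451} makes both local times vanish identically and the computation becomes classical. For the general singular case I would regularize $W$ away from the origin by a family of smooth $\rho$-convex potentials $W_\varepsilon$, apply the synchronous coupling argument to the resulting Tanaka-regular reflected SDE where the decomposition above is justified, and then pass to the limit $\varepsilon \to 0$ using the preservation of $\rho$-convexity under the regularization and the uniqueness of the limit process provided by Cépa--Lépingle~\cite{MR1440140}.
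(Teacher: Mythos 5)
Your proof is essentially the same approach as the paper's for the Wasserstein part: the paper proves it by exactly the synchronous (parallel) coupling you describe, using $\rho$-convexity of $U$ for the drift and the convexity of the Weyl chamber to discard the reflection terms. For the entropy part, the paper simply cites \cite[Theorem~5.2.1]{MR3155209}, whereas you carry out the De~Bruijn/Fisher-information argument explicitly; this is the standard proof of that cited theorem, so again it is the same route, just unfolded.

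One point worth flagging: your computation gives $\frac{\d}{\d t}\mathrm H(\nu_0 P_t\mid\mu)\leq -2\rho\,\mathrm H(\nu_0 P_t\mid\mu)$, hence
\[
\mathrm H(\nu_0 P_t\mid\mu)\leq \e^{-2\rho t}\,\mathrm H(\nu_0\mid\mu),
\]
which is the sharp rate produced by the log-Sobolev inequality with constant $2/\rho$ and is strictly stronger than the rate $\e^{-\rho t}$ stated in~\eqref{eq:relent}. So you have not merely matched the ``rate announced'' — you have improved it. (The paper's stated $\e^{-\rho t}$ is correct but not tight; the reference it cites also gives $\e^{-2\rho t}$.) Your extra paragraph on regularizing $W$ and on invoking Rogers--Shi and Cépa--Lépingle to justify the Itô/Tanaka manipulations is a sensible precaution that the paper glosses over, though note that the paper itself frames the theorem under the standing assumption of Section~\ref{se:dynamics} that the reflected diffusion is well defined, so those technicalities are taken as given rather than reproved.
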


A proof of Theorem \ref{th:equilib} is given in Section \ref{ss:proof:equilib}.
\subsection{Hermite--Lassalle orthogonal polynomials}
\label{se:polys}

Recall that for all $n\geq1$, the classical Hermite polynomials
${(H_{k_1,\ldots,k_n})}_{k_1\geq0,\ldots,k_n\geq0}$ are the orthogonal
polynomials for the standard Gaussian distribution $\gamma_n$ on $\R^n$. The
tensor product structure $\gamma_n=\gamma_1^{\otimes n}$ gives
$H_{k_1,\ldots,k_n}(x_1,\ldots,x_n)=H_{k_1}(x_1)\cdots H_{k_n}(x_n)$ where
${(H_k)}_{k\geq0}$ are the orthogonal polynomials for the one-dimensional
Gaussian distribution $\gamma_1$. Among several remarkable characteristic
properties, these polynomials satisfy a differential equation which writes
\begin{equation}\label{eq:OU}
  \mathrm{L}H_{k_1,\ldots,k_n}=-(k_1+\cdots+k_n)H_{k_1,\ldots,k_n}
  \quad\text{where}\quad
  \mathrm{L}=\Delta-\langle x,\nabla\rangle
\end{equation}
is the infinitesimal generator of the Ornstein--Uhlenbeck process, which
admits $\gamma_n$ as a reversible invariant measure. In other words these
orthogonal polynomials form a complete set of eigenfunctions of this operator.
Such a structure is relatively rare, see \cite{MR1478714} for a complete
classification when $n=1$.

Lassalle discovered in the 1990s that a very similar phenomenon takes place
for beta Hermite ensembles and the Dyson--Ornstein--Uhlenbeck process,
provided that we restrict to symmetric polynomials. Observe first that this
cannot hold for all polynomials, simply because the infinitesimal generator
\begin{equation}\label{eq:diff-beta}
  \mathrm{G}
  =\sum_{i=1}^n\partial^2_{x_i}
    -n\sum_{i=1}^nx_i\partial_{x_i}
    +\beta\sum_{i\neq j}\frac{1}{x_i-x_j}\partial_{x_i},
\end{equation}  
of the Dyson--Ornstein--Uhlenbeck process, which is a special case of
\eqref{eq:diff}, does not preserve polynomials, for instance we have
$\mathrm{G}x_1=-nx_1+\beta\sum_{j\neq 1}\frac{1}{x_1-x_j}$. However, rewriting
this operator by symmetrization as
\begin{equation}\label{eq:diff-beta-s}
  \mathrm{G}
  =\sum_{i=1}^n\partial^2_{x_i}
  -n\sum_{i=1}^nx_i\partial_{x_i}
  +\frac{\beta}{2}\sum_{i\neq j}\frac{1}{x_i-x_j}(\partial_{x_i}-\partial_{x_j}) ,
\end{equation}  
it is easily seen that the set of symmetric polynomials in $n$ variables is left
invariant by $\mathrm{G}$.

Let $\mu$ be the beta Hermite ensemble defined in \eqref{eq:BHE}.
Lassalle studied in
\cite{MR1133488} multivariate symmetric polynomials
${(P_{k_1,\ldots,k_n})}_{k_1\geq\cdots\geq k_n\geq0}$ which are orthogonal
with respect to $\mu$. He called them ``generalized Hermite'' but we
decide to call them ``Hermite--Lassalle''. For all
$k_1\geq\cdots\geq k_1\geq0$ and $k_1'\geq\cdots\geq k_n'\geq0$,
\begin{equation}\label{eq:orth}
  \int P_{k_1,\ldots,k_n}(x_1,\ldots,x_n)P_{k_1',\ldots,k_n'}(x_1,\ldots,x_n)
  \mu(\d x)
  =\mathbbm{1}_{(k_1,\ldots,k_n)=(k_1',\ldots,k_n')}.
\end{equation}
They can be obtained from the standard basis of symmetric polynomials by using
the Gram--Schmidt algorithm in the Hilbert space $L^2_{\mathrm{sym}}(\mu)$ of
square integrable symmetric functions. The total degree of
$P_{k_1,\ldots,k_n}$ is $k_1+\cdots+k_n$, in particular $P_{0,\dotsc,0}$ is a
constant polynomial. The numbering in terms of $k_1,\ldots,k_n$ used in
\cite{MR1133488} is related to Jack polynomials. Beware that \cite{MR1133488}
comes without proofs. We refer to \cite{MR1471336} for proofs, and to
\cite{MR2325917} for symbolic computation via Jack polynomials. 

The Hermite--Lassalle symmetric polynomials form an orthogonal basis in
$L^2_{\mathrm{sym}}(\mu)$ of eigenfunctions of the Dyson--Ornstein--Uhlenbeck
operator $\mathrm G$. Restricted to symmetric functions, this operator 
is thus exactly solvable, just like the classical
Ornstein--Uhlenbeck operator. Here is the result of Lassalle in
\cite{MR1133488}, see \cite{MR1471336} for a proof.

\begin{theorem}[Eigenfunctions and eigenvalues]\label{th:lassalle:generator}
  For all $n\geq2$ and $k_1\geq\cdots\geq k_n\geq0$,
  \begin{equation}\label{eq:eigfunc}
    \mathrm{G}P_{k_1,\ldots,k_n}=-n(k_1+\cdots+k_n)P_{k_1,\ldots,k_n}.
  \end{equation}
  where $\mathrm{G}$ is the operator \eqref{eq:diff-beta}.
\end{theorem}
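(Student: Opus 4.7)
The plan is to combine two structural properties of $\mathrm{G}$—preservation of the filtration by total degree, and symmetry in $L^2_{\mathrm{sym}}(\mu)$—to extract the eigenvalues directly, without ever computing $\mathrm{G}P_{k_1,\ldots,k_n}$ by hand. Let $V_d$ denote the space of symmetric polynomials in $n$ variables of total degree at most $d$. Using the symmetrised expression \eqref{eq:diff-beta-s}, I first check that $\mathrm{G}V_d\subset V_d$. The Ornstein--Uhlenbeck drift $-n\sum_ix_i\partial_{x_i}$ acts as $-nk$ on any homogeneous symmetric polynomial of degree $k$; the Laplacian lowers the degree by $2$; and for the interaction term, if $P$ is symmetric then $(\partial_{x_i}-\partial_{x_j})P$ is antisymmetric in $(x_i,x_j)$, hence divisible by $x_i-x_j$ as a polynomial, the quotient being symmetric in $(x_i,x_j)$ and of degree two less than $P$. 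Summing over $i\neq j$ preserves symmetry. In particular, for $P$ homogeneous of degree $d$, one has $\mathrm{G}P=-ndP+R$ with $R\in V_{d-2}$.

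Next I would establish that $\mathrm{G}$ is symmetric on $L^2_{\mathrm{sym}}(\mu)$: integration by parts on the Weyl chamber $D$ yields the identity $\int(\mathrm{G}f)g\,\d\mu=-\int\langle\nabla f,\nabla g\rangle\,\d\mu$ for symmetric polynomial $f,g$. The density $\e^{-U}$ behaves like $\prod_{i<j}(x_i-x_j)^{\beta}$ near the diagonals, which is enough to kill the boundary contributions for every $\beta>0$ via a standard cutoff argument, after noting that polynomial derivatives are bounded. With symmetry in hand, $V_{d-1}$ is $\mathrm{G}$-stable by the first step, hence so is its orthogonal complement $W_d=V_d\cap V_{d-1}^{\perp}$ in $L^2_{\mathrm{sym}}(\mu)$. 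I claim $\mathrm{G}$ acts on $W_d$ as the scalar $-nd$. Indeed take $\tilde P\in W_d$; its homogeneous component of degree $d$, call it $P_d$, satisfies $\tilde P-P_d\in V_{d-1}$, so the computation of the previous paragraph gives $\mathrm{G}\tilde P+nd\tilde P\in V_{d-1}$. But this vector also lies in $W_d$ by invariance, and $W_d\cap V_{d-1}=\{0\}$ (a vector orthogonal to itself vanishes), so $\mathrm{G}\tilde P=-nd\tilde P$.

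Finally, by construction via Gram--Schmidt applied to the graded basis of symmetric polynomials, the Hermite--Lassalle polynomial $P_{k_1,\ldots,k_n}$ has total degree $d=k_1+\cdots+k_n$ and is orthogonal in $L^2_{\mathrm{sym}}(\mu)$ to all symmetric polynomials of strictly smaller degree, hence $P_{k_1,\ldots,k_n}\in W_d$. The previous step then yields \eqref{eq:eigfunc}. The main obstacle I expect is the symmetry of $\mathrm{G}$ near $\partial D$ when $\beta$ is small and the density vanishes only mildly; once this is justified, the rest is linear algebra on the graded filtration, and in particular one never has to manipulate the Hermite--Lassalle polynomials themselves.
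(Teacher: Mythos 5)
Your proof is correct, and it takes a genuinely different route from the one the paper points to. The paper itself gives no proof of Theorem~\ref{th:lassalle:generator}; it refers the reader to Baker and Forrester~\cite{MR1471336}, whose argument goes through Jack polynomials and their hypergeometric expansions. You bypass Jack polynomials entirely: you exhibit $\mathrm{G}$ as ``lower triangular plus scalar'' on the degree filtration $V_d$ of $L^2_{\mathrm{sym}}(\mu)$, and then use the self-adjointness of $\mathrm{G}$ (so that it preserves $V_{d-1}^{\perp}$) to conclude that it acts as the scalar $-nd$ on $W_d = V_d \cap V_{d-1}^{\perp}$. This is precisely the kind of ``direct proof \ldots\ without using Jack polynomials'' the authors wonder about in their comments section, and it promotes into a rigorous argument the heuristic they state just below the theorem, namely that the spectrum ``can be guessed from the fact that the eigenfunctions are polynomials together with the fact that the interaction term lowers the degree.'' The computational core---that $(\partial_{x_i}-\partial_{x_j})P$ is antisymmetric in $(x_i,x_j)$ when $P$ is symmetric, hence divisible by $x_i-x_j$, so each interaction term lowers total degree by two---and the concluding linear-algebra step $W_d\cap V_{d-1}=\{0\}$ are exactly right.

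Two points deserve a slightly more careful formulation than the proposal gives. The aside ``polynomial derivatives are bounded'' is not literally true on $\R^n$; the cutoff for the integration by parts has to kill boundary terms both on the diagonal strata of $\partial D$, where $\e^{-U}$ vanishes like $(x_i-x_j)^{\beta}$ (which suffices for every $\beta>0$), and at infinity, where the Gaussian factor of $\e^{-U}$ dominates any polynomial growth of $f$, $g$, and their derivatives. Second, the claim that $P_{k_1,\ldots,k_n}\in W_d$ with $d=\sum k_i$ implicitly requires the Gram--Schmidt ordering on symmetric polynomials to refine the total-degree ordering; this holds for the Jack-related numbering the paper alludes to since Jack polynomials are homogeneous, but it is worth stating as a hypothesis on the construction you invoke. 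Neither is a real gap, and the argument is otherwise complete and, in my view, cleaner than the cited reference.
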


When $\beta=0$ then $G$ becomes the Ornstein--Uhlenbeck operator. For all
$\beta>0$, the spectrum of $G$ is identical to the one of the
Ornstein--Uhlenbeck operator.
This can be guessed from the fact that the eigenfunctions
are polynomials together with the fact that the interaction term (the non
O.-U.\ part) lowers the degree of polynomials.

The spectral gap of $\mathrm{G}$ in $L^2_{\mathrm{sym}}(\mu)$ is $n$: if
$f\in L^2_\mathrm{sym} (\mu)$ is orthogonal to constants then
\[
n \int f^2 \, d\mu \leq - \int f\, \mathrm{G}f \, \d\mu = \int \vert \nabla f \vert^2 \, d\mu .
\]
Theorem~\ref{th:PI} shows that this inequality holds actually for all $f$, not
only symmetric ones.

Hermite--Lassalle polynomials can be decomposed in terms of Jack
polynomials, and this decomposition generalizes the hypergeometric expansion
of classical Hermite polynomials.

\begin{remark}[Examples and formulas]\label{rm:xpl}
  It is not difficult to check that up to normalization
  \[
    x_1+\cdots+x_n
    \quad\text{and}\quad
    x_1^2+\cdots+x_n^2-1-\beta\frac{n-1}{2}.
    \
  \]
  are Hermite--Lassalle polynomials. In the GUE case, $\beta=2$, Lassalle
  gave in \cite{MR1133488}, using Jack polynomials and Schur functions, a
  formula for $P_{k_1,\ldots,k_n}$ in terms of a ratio of a determinant
  involving classical Hermite polynomials and a Vandermonde determinant.
\end{remark}

\subsection{Comments and open questions}

Regarding functional inequalities, one can probably extend the results to the
class of Gaussian $\varphi$-Sobolev inequalities such as the Beckner
inequality \cite{MR954373}, see also \cite{MR2081075}. Lassalle has studied
not only the beta Hermite ensemble in \cite{MR1133488}, but also the
beta Laguerre ensemble in \cite{MR1105634} with density proportional to
\[
  x\in D\mapsto \prod_{k=1}^nx_k^a\mathrm{e}^{-bnx_k}
  \prod_{i<j}(x_i-x_j)^\beta \mathbbm{1}_{x_1\geq\cdots\geq x_n\geq0},
\]
and the beta Jacobi ensemble in \cite{MR1096625} with density proportional to
\[
  x\in
  D\mapsto\prod_{k=1}^nx_k^{a-1}(1-x_k)^{b-1}
  \prod_{i<j}(x_i-x_j)^\beta\mathbbm{1}_{1\geq x_1\geq\cdots\geq x_n\geq0}.
\]
It is tempting to study functional inequalities and concentration of measure
for these ensembles. The proofs of Lassalle, based on Jack polynomials, are
not in \cite{MR1105634,MR1096625,MR1133488} but can be found in the work
\cite{MR1471336} by Baker and Forrester. We refer to \cite{MR1843558} for the
link with Macdonald polynomials. It is natural (maybe naive) to ask about
direct proofs of these results without using Jack polynomials. The study of
beta ensembles can be connected to $H$-transforms and to the work
\cite{MR1678525} on Brownian motion in a Weyl chamber, see also
\cite{doumerc}. The analogue of the Dyson Brownian motion for the Laguerre
ensemble is studied in \cite{MR1132135}, see also
\cite{MR1871699,doumerc,MR3769666,voit-woerner}. Tridiagonal matrix models for
Dyson Brownian motion are studied in \cite{holcomb-paquette}.

The natural isometry between $L^2(\gamma_n)$ and $L^2(\d x)$ leads to
associate to the Ornstein--Uhlenbeck operator a real Schrödinger operator
which turns out to be the quantum harmonic oscillator. Similarly, the natural
isometry between $L^2_{\mathrm{sym}}(\mu)$ and $L^2_{\mathrm{sym}}(\d x)$
leads to associate to the Dyson--Ornstein--Uhlenbeck operator a real
Schödinger operator known as the Calogero--Moser--Sutherland operator, which
is related to radial Dunkl operators, see for instance
\cite{MR1652182,MR1843558}. The fact that the eigenfunctions of such operators
are explicit and involve polynomials goes back at least to Calogero
\cite{MR0280103}, more than twenty five years before Lassalle!

The factorization phenomenon captured by Lemma \ref{le:factor}, which is
behind the optimality provided by Theorem \ref{th:opt}, reminds some kind of
concentration-compactness related to continuous spins systems as in
\cite{MR2028218} and \cite{MR1847094} for instance. The factorization Lemma
\ref{le:factor} remains valid for other ensembles such as the Beta-Ginibre
ensemble with density proportional to
\begin{equation}\label{eq:BGE}
  z\in\mathbb{C}^n\mapsto\mathrm{e}^{-n\sum_{k=1}^n|z_i|^2}\prod_{j<k}|z_j-z_k|^\beta,
\end{equation}
see \cite[Remark 5.4]{coulsim} for the case $n=\beta=2$. However, in contrast
with the Beta-Hermite ensemble, the interaction term is not convex in the complex case, 
and it is not clear at all what are the Poincaré and log-Sobolev constants of the Ginibre ensemble. 
See~\cite{coulsim} for an upper bound and further discussions on the associated dynamics. 

\section{Useful or beautiful facts}\label{se:facts}

\subsection{Random matrices, GUE, and beta Hermite ensemble}
\label{se:GUE}
The following result from random matrix theory goes back to Dyson, see
\cite{MR0177643,MR2129906,MR2760897,MR2641363}.

\begin{theorem}[Gaussian random matrices and GUE]\label{th:GUEM}
  The Gaussian unitary ensemble $\mu$ defined by \eqref{eq:GUE} is the law
  of the ordered eigenvalues of a random $n\times n$ Hermitian matrix $H$ with
  density proportional to
  $ h\mapsto \mathrm{e}^{-\frac{n}{2}\mathrm{Trace}(h^2)}
  =\mathrm{e}^{-\frac{n}{2}\sum_{i=1}^n h_{ii}^2 -n\sum_{i<j}|h_{ij}|^2} $ in
  other words the $n^2$ real random variables
  ${\{H_{ii},\Re H_{ij},\Im H_{ij}\}}_{1\leq i < j\leq n}$ are independent,
  with $\Re H_{ij}$ and $\Im H_{ij}$ $\sim\mathcal{N}(0,1/(2n))$ for any $i<j$
  and $H_{ii}$ $\sim\mathcal{N}(0,1/n)$ for any $1\leq i\leq n$.
\end{theorem}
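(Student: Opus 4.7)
The plan is to show that the joint distribution of the ordered eigenvalues of $H$ coincides with $\mu$ via a change of variables from Hermitian matrices to their spectral data, and to invoke unitary invariance to integrate out the eigenvector part.

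First I would unpack the density. Writing $h=(h_{ij})$ with $h_{ii}\in\R$ and $h_{ij}=a_{ij}+\mathrm{i}b_{ij}$ for $i<j$, the identity
\[
\mathrm{Trace}(h^2)=\sum_{i=1}^n h_{ii}^2+2\sum_{i<j}(a_{ij}^2+b_{ij}^2)
\]
shows that the Gaussian density of $H$ described in the statement is proportional to $\mathrm{e}^{-\frac{n}{2}\mathrm{Trace}(h^2)}$ with respect to the Lebesgue measure on the real vector space $\mathcal{H}_n$ of $n\times n$ Hermitian matrices. A direct computation of the normalising Gaussian constant produces an explicit proportionality factor, but its precise value is irrelevant for the argument.

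Next I would exploit unitary invariance. For any unitary $U$ one has $\mathrm{Trace}((UhU^*)^2)=\mathrm{Trace}(h^2)$, and the Lebesgue measure on $\mathcal{H}_n$ is invariant under the linear map $h\mapsto UhU^*$, hence so is the law of $H$. The spectral theorem lets me decompose, off a Lebesgue-null set, any $h\in\mathcal{H}_n$ as $h=UDU^*$ with $D=\mathrm{diag}(x_1,\dotsc,x_n)$, $x_1>\cdots>x_n$, and $U\in\mathrm{U}(n)/\mathrm{T}^n$ (quotient by the diagonal torus that stabilises $D$). The central computation is the Weyl integration formula: under this parameterisation,
\[
\d h=c_n\,\prod_{i<j}(x_i-x_j)^2\,\d x_1\cdots\d x_n\,\d[U],
\]
where $\d[U]$ is the pushforward of the Haar measure on $\mathrm{U}(n)$ to $\mathrm{U}(n)/\mathrm{T}^n$ and $c_n$ is a universal constant. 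To see this, I would compute the differential of $(D,[U])\mapsto UDU^*$ at a fixed diagonal $D$: writing $\delta h=U(\delta D+[A,D])U^*$ with $A$ skew-Hermitian representing an infinitesimal motion in $\mathrm{U}(n)/\mathrm{T}^n$, the diagonal block contributes $\d x_1\cdots\d x_n$, while on the off-diagonal the entries of $[A,D]$ are $(x_i-x_j)A_{ij}$, yielding a Jacobian $\prod_{i<j}|x_i-x_j|^2=\prod_{i<j}(x_i-x_j)^2$.

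Finally I would combine the pieces. Since $\mathrm{Trace}(h^2)=\sum_i x_i^2$ depends only on the eigenvalues, pushing the density $\mathrm{e}^{-\frac{n}{2}\mathrm{Trace}(h^2)}\d h$ through the change of variables and integrating out $[U]$ with respect to the (finite) measure $\d[U]$ produces a density on the Weyl chamber proportional to
\[
\mathrm{e}^{-\frac{n}{2}\sum_{i=1}^n x_i^2}\prod_{i<j}(x_i-x_j)^2\mathbbm{1}_{\{x_1\geq\cdots\geq x_n\}},
\]
which is exactly \eqref{eq:GUE}. The main obstacle is the Weyl integration formula itself: the count and organisation of real degrees of freedom (diagonal plus off-diagonal on one side, eigenvalues plus quotiented unitary orbit on the other) and the identification of the Jacobian with $\prod_{i<j}(x_i-x_j)^2$ require the infinitesimal commutator computation outlined above, while the rest of the argument is bookkeeping.
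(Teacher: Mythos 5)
The paper itself does not prove Theorem~\ref{th:GUEM}; it states the result and cites Dyson and the standard random-matrix references \cite{MR0177643,MR2129906,MR2760897,MR2641363}. Your argument is the classical proof one finds in those references, and it is correct: the factorisation of $\mathrm{Trace}(h^2)$ identifies the independent Gaussian entries, unitary invariance reduces the problem to the spectral change of variables $h=UDU^*$, and the Jacobian computation via $\delta h = U(\delta D + [A,D])U^*$ yields exactly the squared Vandermonde, since each off-diagonal pair $i<j$ contributes a complex entry $(x_j-x_i)A_{ij}$ (two real degrees of freedom, hence a factor $|x_i-x_j|^2$); the fact that $\mathrm{Trace}(h^2)=\sum_i x_i^2$ then closes the argument and produces the density~\eqref{eq:GUE} after integrating out the Haar part. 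One cosmetic remark: the commutator entry is $(x_j-x_i)A_{ij}$ rather than $(x_i-x_j)A_{ij}$, but this sign is irrelevant after taking the modulus. You could also shorten the bookkeeping by observing that, for distinct eigenvalues (a full-measure event), the map $(D,[U])\mapsto UDU^*$ from the Weyl chamber times $\mathrm U(n)/\mathrm T^n$ to $\mathcal H_n$ is a smooth bijection onto the nondegenerate Hermitian matrices, so the change-of-variables formula applies directly without any further case analysis.
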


There is an analogue theorem for the GOE case $\beta=1$ with random Gaussian
real symmetric matrices, and for the GSE case $\beta=4$ with random Gaussian
quaternion selfdual matrices. The following result holds for all beta Hermite
ensemble \eqref{eq:BHE}, see \cite{MR1936554}.

\begin{theorem}[Tridiagonal random matrix model for beta Hermite ensemble]
  \label{th:BHEM}
  The beta Hermite ensemble $\mu$ defined by \eqref{eq:BHE} is the
  distribution of the ordered eigenvalues of the random tridiagonal symmetric
  $n\times n$ matrix
  \[
    H=\frac{1}{\sqrt{2n}}
    \begin{pmatrix}
      \mathcal{N}(0, 2) & \chi_{(n-1) \beta} & & & \\
      \chi_{(n-1) \beta} & \mathcal{N}(0, 2)  & \chi_{(n-2) \beta} & & \\
      & \ddots & \ddots & \ddots & \\
      & & \chi_{2\beta} & \mathcal{N}(0,2) & \chi_{\beta} \\
      & &  & \chi_{\beta} & \mathcal{N}(0,2)
    \end{pmatrix}
  \]
  where, up to the scaling prefactor $1/\sqrt{2n}$, the entries in the upper
  triangle including the diagonal are independent, follow a Gaussian law
  $\mathcal{N}(0,2)$ on the diagonal, and $\chi$-laws just above the diagonal
  with a decreasing parameter with step $\beta$ from $(n-1)\beta$ to $\beta$.
\end{theorem}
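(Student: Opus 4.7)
The plan is to follow the proof of Dumitriu and Edelman \cite{MR1936554} and compute directly the law of the ordered eigenvalues of the tridiagonal matrix, rather than trying to build a full matrix ensemble (which does not exist for general $\beta>0$). It is convenient to work with $T:=\sqrt{2n}\,H$, whose diagonal entries are $\mathcal N(0,2)$ and whose subdiagonal entries are $\chi_{(n-i)\beta}$ for $i=1,\dots,n-1$, all independent; the ordered eigenvalues of $H$ are then those of $T$ divided by $\sqrt{2n}$.

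First I would parametrise real symmetric tridiagonal $n\times n$ matrices by a diagonal $a\in\R^n$ and a subdiagonal $b\in(0,\infty)^{n-1}$ (the null event $\{b_i=0\}$ can be discarded). Using $\mathrm{Tr}(T^2)=\sum_i a_i^2+2\sum_i b_i^2$, the joint density of $(a,b)$ reads
\[
f(a,b)\;\propto\;\e^{-\mathrm{Tr}(T^2)/4}\prod_{i=1}^{n-1}b_i^{(n-i)\beta-1}.
\]
Since the subdiagonal is strictly positive, $T$ is unreduced in the sense of Jacobi matrix theory, so its eigenvalues $\lambda_1>\cdots>\lambda_n$ are simple and every unit eigenvector has nonzero first coordinate. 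After a choice of signs, these first coordinates form a vector $q\in S^{n-1}_+$ in the strictly positive orthant of the unit sphere, and the map $T\mapsto(\lambda,q)$ is a smooth bijection onto a full-measure open subset of $\Lambda\times S^{n-1}_+$, where $\Lambda=\{\lambda_1>\cdots>\lambda_n\}$; its inverse is the classical Lanczos reconstruction of $T$ from its spectral data.

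The heart of the argument is then the Jacobian of this change of variables, a linear-algebraic identity going back to Trotter (see \cite{MR1936554}) which, combined with the orthogonal invariance $\mathrm{Tr}(T^2)=\sum_i \lambda_i^2$, puts the joint density of $(\lambda,q)$ into the factorised form
\[
g(\lambda,q)\;\propto\;\e^{-\sum_i \lambda_i^2/4}\prod_{i<j}(\lambda_i-\lambda_j)^{\beta}\prod_{i=1}^n q_i^{\beta-1}
\]
on $\Lambda\times S^{n-1}_+$. The $q$-factor integrates to a $\lambda$-independent Dirichlet-type constant, so the marginal law of $\lambda$ is proportional to $\e^{-\sum_i \lambda_i^2/4}\prod_{i<j}(\lambda_i-\lambda_j)^\beta$ on $\Lambda$. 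The rescaling $x=\lambda/\sqrt{2n}$ then converts $\e^{-\sum_i \lambda_i^2/4}$ into $\e^{-n|x|^2/2}$ and preserves the Vandermonde factor up to a multiplicative constant, yielding exactly \eqref{eq:BHE}. The hard step here is the Trotter Jacobian computation: it is an entirely elementary but delicate piece of linear algebra, and everything else in the proof is routine manipulation of densities.
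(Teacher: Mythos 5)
The paper does not prove Theorem~\ref{th:BHEM}; it states it as a known result and cites Dumitriu and Edelman~\cite{MR1936554}. Your sketch is a faithful outline of exactly that argument — the joint density of the diagonal and subdiagonal, the change of variables to spectral data $(\lambda,q)$ via the inverse Jacobi (Lanczos) reconstruction, the Trotter Jacobian identity leading to the factorized density $\propto \e^{-\sum_i\lambda_i^2/4}\prod_{i<j}(\lambda_i-\lambda_j)^\beta\prod_i q_i^{\beta-1}$, integration over $q$, and the final rescaling by $\sqrt{2n}$ — so it is entirely consistent with the paper's (cited) approach.
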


In particular the trace follows the Gaussian law $\mathcal{N}(0,1)$. Such
random matrix models with independent entries allow notably to compute moments
of \eqref{eq:BHE} via traces of powers.

\subsection{Isotropy of beta Hermite ensembles}

This helps to understand the structure.
Let $\mu$ be the beta Hermite ensemble~\eqref{eq:BHE}, and let
  $\widetilde\mu$ be the probability measure
obtained from $\mu$ by symmetrizing coordinates: 
For every test function 
$f:\mathbb{R}^n\to\mathbb{R}$ we have
\[
  \int f \,\d\widetilde \mu
  =\int f_* \, \d \mu
\]
where $f_*$ is the symmetrization of $f$, defined by
\[
f_*(x_1,\ldots,x_n)=\frac{1}{n!}\sum_{\sigma\in\Sigma_n}f(x_{\sigma(1)},\ldots,x_{\sigma(n)})
\]
where $\Sigma_n$ is the symmetric group of permutations of $\{1,\ldots,n\}$.
Of course the probability measures $\mu$ and $\widetilde\mu$ coincide on
symmetric test functions. The probability measure $\widetilde \mu$ is by
definition invariant by permutation of the coordinates, and its density with
respect to the Lebesgue measure is 
\[
  \frac{\d\widetilde\mu}{\d x}
 = \frac{\mathrm{e}^{- \frac n 2 \vert x\vert^2  }}{n!Z_\mu}
  \prod_{i<j} \vert x_i - x_j\vert^\beta 
\]
Note that the support of $\widetilde \mu$ is the whole space and that $\widetilde \mu$
is not log-concave, even though $\mu$ is.
\begin{corollary}[Isotropy of beta Hermite ensemble]\label{co:isotropy}
  For every $1\leq i\neq j\leq n$,
  \[
    \int x_i \, \d\widetilde\mu =0, \quad
    \int x_i^2 \, \d\widetilde\mu = \frac \beta 2 + \frac {2-\beta}{2n} , 
    \quad \int x_ix_j \, \d\widetilde\mu  = - \frac \beta {2n} . 
  \]
  In particular, the law $\widetilde\mu$ is asymptotically isotropic. 
\end{corollary}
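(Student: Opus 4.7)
The plan is to combine the permutation-invariance of $\widetilde\mu$, which makes the one-point marginals (and the two-point marginals at distinct indices) identical across coordinates, with the tridiagonal matrix model of Theorem~\ref{th:BHEM}. Let $H$ denote that random tridiagonal matrix, whose ordered eigenvalues $\lambda_1\geq\cdots\geq\lambda_n$ are distributed according to $\mu$. For every symmetric polynomial $P$ on $\R^n$ one has
\[
\int P\,\d\widetilde\mu=\int P\,\d\mu=\E\,P(\lambda_1,\ldots,\lambda_n),
\]
and the symmetric power sums $\sum_k\lambda_k^p$ coincide with the traces $\mathrm{Tr}(H^p)$, which are readily computable from the explicit law of the entries of $H$.

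The first identity $\int x_i\,\d\widetilde\mu=0$ will follow directly from the sign-flip invariance of $\widetilde\mu$: its density is unchanged by $x\mapsto -x$, because both $|x|^2$ and $\prod_{i<j}|x_i-x_j|^\beta$ are.

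For the second moments, exchangeability gives
\[
n\int x_i^2\,\d\widetilde\mu=\E[\mathrm{Tr}(H^2)],\quad n(n-1)\int x_ix_j\,\d\widetilde\mu=\E[\mathrm{Tr}(H)^2]-\E[\mathrm{Tr}(H^2)].
\]
I would then evaluate both traces via the matrix model. First, $\mathrm{Tr}(H)\sim\mathcal N(0,1)$ (already noted after Theorem~\ref{th:BHEM}), so $\E[\mathrm{Tr}(H)^2]=1$. Second, $\E[\mathrm{Tr}(H^2)]$ splits into a diagonal part equal to $1$ (from $n$ independent $\mathcal{N}(0,2)$ entries rescaled by $1/\sqrt{2n}$) and an off-diagonal part equal to $\beta(n-1)/2$ (from the two symmetric bands of $\chi_{k\beta}^2$ entries, using $\E[\chi_m^2]=m$ and $\sum_{k=1}^{n-1}k=n(n-1)/2$). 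Plugging in yields the announced values $\frac{\beta}{2}+\frac{2-\beta}{2n}$ and $-\frac{\beta}{2n}$.

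No serious obstacle is anticipated: once the relevant quantities are identified with traces, the proof reduces to adding the second moments of the Gaussian and chi entries along the three diagonals of $H$. The only point requiring mild care is tracking the prefactor $1/\sqrt{2n}$, which squares to $1/(2n)$ and produces the correct splitting between the $1$ and $\beta(n-1)/2$ contributions.
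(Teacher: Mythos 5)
Your proposal is correct and follows essentially the same route as the paper: both reduce the moments of $\widetilde\mu$ to $\E[\mathrm{Tr}(H)^2]$ and $\E[\mathrm{Tr}(H^2)]$ via exchangeability and evaluate these traces from the tridiagonal model of Theorem~\ref{th:BHEM}. The only cosmetic difference is that you deduce $\int x_i\,\d\widetilde\mu=0$ from the sign-flip symmetry of $\widetilde\mu$, whereas the paper notes that $x_1+\cdots+x_n$ is exactly standard Gaussian (which is also the input it reuses for the second moments), but both observations are immediate and equivalent here.
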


Recall that isotropy means zero mean and covariance matrix multiple of the
identity.

In the extremal case $\beta=0$, the measure $\widetilde\mu$ is the Gaussian
law $\mathcal{N}(0,\frac{1}{n}I_n)$.

\begin{proof}[Proof of Corollary \ref{co:isotropy}]
  Observe first that if $X \sim \mu$ then $\sum X_i$ is a standard Gaussian.
  This can be seen using Theorem~\ref{th:BHEM}, and observing that $\sum X_i$
  coincides with the trace of the matrix $H$. Actually this is true regardless
  of the interaction potential $W$, see Lemma~\ref{le:factor} below. In
  particular
  \[
    \int (x_1+\cdots+x_n) \, \mu(\d x)  = 0 ,
  \]
  hence, by definition $\widetilde\mu$,
  \[
    \int x_i \, \widetilde\mu(\d x)
    =\frac 1n \int (x_1+\dotsb+x_n) \,\mu(\d x)
    =0 , 
  \]
  for every $i\leq n$. Since $\sum X_i$ is a standard Gaussian we also have
  \begin{equation}\label{eq:isop1}
    \int (x_1+\cdots+x_n)^2 \, \mu(\d x)  = 1. 
  \end{equation}
  Next we compute $\int \vert x \vert^2\,\d\mu$. This can be done using
  Theorem~\ref{th:BHEM}, namely
  \begin{equation}\label{eq:isop2}
  \int \vert x\vert^2 \, \mu ( \d x)  =\mathbb{E}(\mathrm{Trace}(H^2))
    =1+\frac{\beta}{n}\sum_{k=1}^{n-1}k
    =1+\frac{(n-1)\beta}{2}.
  \end{equation}
  Note that the matrix model gives more: indeed, using the algebra of the
  Gamma laws,
  \[
    \mathrm{Trace}(H^2)
      \sim\mathrm{Gamma}\left(\frac{n}{2}+\frac{\beta n(n-1)}{4},\frac{n}{2}\right).
  \]

  Alternatively one can use the fact that the square of the norm
  $\left|\cdot\right|^2$ is, up to an additive constant, an eigenvector of
  $\mathrm{G}$, see Remark~\ref{rm:xpl}. Namely, recall the
  definition~\eqref{eq:diff-beta} of the operator $\mathrm G$ and note that
  \[
    \mathrm G(\left\vert\cdot\right\vert^2)(x) %
    = 2n-2n\vert x\vert^2 +2\beta\sum_{i\neq j}\frac{x_i}{x_i-x_j}%
    = 2n - 2 n \vert x \vert^2 + n (n-1) \beta .
  \]
  In particular $\mathrm G(\left\vert\cdot\right|^2)\in L^2(\mu)$. Since $\mu$
  is stationary, we then have $\int \mathrm G \vert x \vert^2 \, \d\mu = 0$,
  and we thus recover~\eqref{eq:isop2}.
  
  Combining~\eqref{eq:isop1} and~\eqref{eq:isop2} we get
  \[
    \int x_i^2 \, \widetilde\mu ( \d x) 
    =  \frac 1n \int \vert x \vert^2 \, \mu ( \d x ) 
    = \frac{\beta}2 + \frac {2-\beta} {2n} , 
  \]
  and 
  \[
    \int x_ix_j\,\widetilde\mu(\d x)
    = \frac1{n(n-1)}\int(x_1+\dotsb+x_n)^2-(x_1^2 + \dotsb + x_n^2)\,\widetilde\mu(\d x) 
    = - \frac \beta {2n} . 
  \]
\end{proof}

\begin{remark}[Mean and covariance of beta Hermite ensembles]
  Let $\mu$ and $\widetilde \mu$ be as in Corollary \ref{co:isotropy}. In
  contrast with the probability measure $\widetilde\mu$, the probability
  measure $\mu$ is log-concave but is not centered, even asymptotically as
  $n\to\infty$, and this is easily seen from $0\not\in D$. Moreover, if
  $X_n=(X_{n,1},\ldots,X_{n,n})\sim\mu$ then the famous Wigner theorem for the
  beta Hermite ensemble, see for instance \cite{MR1781846}, states that almost
  surely and in $L^1$, regardless of the way we choose the common probability
  space,
  \begin{equation}\label{eq:BHEL}
    \frac{1}{n}\sum_{i=1}^n\delta_{X_{n,i}}
    \underset{n\to\infty}{\overset{\mathrm{weak}}{\longrightarrow}}
    \nu_\beta
  \end{equation}
  where
  \begin{align}
    \nu_\beta
    &=\arg\inf_\mu
    \left(
      \int\frac{x^2}{2}\d\mu(x)-\beta\iint\log(x-y)\d\mu(x)\d\mu(y)
    \right)\nonumber\\
    &=\frac{\sqrt{2\beta-x^2}}{\beta\pi}
    \mathbbm{1}_{[-\sqrt{2\beta},\sqrt{2\beta}]}(x)\d x.\label{eq:BHELM} 
  \end{align}
  This follows for instance from a large deviation principle. Moreover it can
  be shown that $X_{n,1}\underset{n\to\infty}{\longrightarrow}-\sqrt{2\beta}$
  and $X_{n,n}\underset{n\to\infty}{\longrightarrow}\sqrt{2\beta}$. This
  suggests in a sense that asymptotically, as $n\to\infty$, the mean is
  supported by the whole interval $[-\sqrt{2\beta},\sqrt{2\beta}]$. It is
  quite natural to ask about the asymptotic shape of the covariance matrix of
  $\mu$. Elements of answer can be found in the work of Gustavsson
  \cite{MR2124079}.
\end{remark}
%

\subsection{Log-concavity and curvature} 

The following Lemma is essentially the key of the proof of Theorem \ref{th:PI}
and Theorem \ref{th:LSI}.

\begin{lemma}[Log-concavity and curvature]\label{le:conc}
  Let $\mu$ be as in \eqref{eq:mun}. Then $U$ is $\rho$-convex. In particular,
  for the beta Hermite ensemble \eqref{eq:BHE}, the potential $U$ is
  $n$-convex, for all $\beta>0$.
\end{lemma}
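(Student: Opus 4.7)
The plan is to decompose $U = V + U_W$ and show that $\rho$-convexity of $U$ reduces to the convexity of each pair-interaction term, which follows because composing a convex function with an affine map preserves convexity.

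More precisely, first I would record the general stability fact: if $\phi$ is $\rho$-convex and $\psi$ is $0$-convex (i.e.\ convex), then $\phi+\psi$ is $\rho$-convex. This is immediate from the definition, since the inequality
\[
\phi((1-\lambda)x+\lambda y)\leq (1-\lambda)\phi(x)+\lambda\phi(y)-\frac{\rho\,\lambda(1-\lambda)}{2}|y-x|^2
\]
is preserved when adding the analogous inequality for $\psi$ with $\rho$ replaced by $0$. Equivalently, if we think of $\mathcal{C}^2$ objects, $\mathrm{Hess}(\phi+\psi)\geq \rho I_n$ as soon as $\mathrm{Hess}(\phi)\geq \rho I_n$ and $\mathrm{Hess}(\psi)\geq 0$.

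Next I would treat $U_W(x)=\sum_{i<j}W(x_i-x_j)$. For each ordered pair $i<j$, the map $\ell_{ij}\colon x\mapsto x_i-x_j$ is linear, and $W\colon\mathbb{R}\to\mathbb{R}\cup\{+\infty\}$ is convex by hypothesis (with domain $(0,+\infty)$, but this poses no problem: the convexity inequality is interpreted in $\mathbb{R}\cup\{+\infty\}$ with the usual conventions, and $x\mapsto W(\ell_{ij}(x))$ is then a convex function on $\mathbb{R}^n$ with effective domain $\{x_i>x_j\}$). Composition of a convex function with an affine map is convex, so each summand is convex, and a sum of convex functions is convex. Hence $U_W$ is convex, and by the first step $U=V+U_W$ is $\rho$-convex.

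Finally, for the beta Hermite ensemble $V(x)=\frac{n}{2}|x|^2$ has Hessian $nI_n$, so $V$ is $n$-convex, and the above conclusion gives that $U$ is $n$-convex for every $\beta>0$. There is no real obstacle here; the only small subtlety is to make sure that the $\rho$-convexity inequality is correctly interpreted at points where $U_W=+\infty$ (namely outside the Weyl chamber $D$), which causes no issue since the inequality is trivially satisfied when the right-hand side equals $+\infty$ and when both sides equal $+\infty$.
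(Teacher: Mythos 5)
Your proof is correct and follows the same route as the paper: decompose $U=V+U_W$, observe that $U_W$ is convex as a sum of compositions of the convex function $W$ with linear maps, and conclude that adding the convex $U_W$ to the $\rho$-convex $V$ yields a $\rho$-convex $U$. Your version just spells out the extended-real-valued conventions more carefully, which the paper leaves implicit.
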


\begin{proof}
  Recall from \eqref{eq:U} that $U(x)=V(x)+U_W(x)$. Observe that $U_W$ is
  convex as a sum of linear maps composed with the convex function $W$.
  Thus, if $V$ is $\rho$-convex then so is $U$. 
\end{proof}

\subsection{Factorization by projection}

The following factorization lemma is the key of the proof of Theorem
\ref{th:opt}.
Let $u$ be the unit vector of $\R^n$ given by the diagonal direction:
\[
  u=\frac{1}{\sqrt{n}}(1,\dotsc,1)
\]
and let $\pi$ and $\pi^\perp$ be the orthogonal projection onto $\mathbb{R}u$
and $(\mathbb{R}u)^\perp=\{v\in\mathbb{R}^n:\langle v,u\rangle=0\}$.

\begin{lemma}[Factorization by projection]\label{le:factor}
  Let $\mu$ be as in \eqref{eq:mun} and let $X$ be a random vector distributed
  according to $\mu$. Assume that the confinement potential $V$ is quadratic: 
  $V=\rho\left\vert\cdot\right\vert^2/2$ for some $\rho >0$.
  Then $\mu$ has a Gaussian factor in the direction $u$ in the sense that
  $\pi(X)=\langle X,u\rangle u$ and $\pi^\perp(X)$ are independent and
  \[
    \langle X , u \rangle \sim \mathcal{N} \left( 0 , \frac 1\rho \right).
  \] 
  Moreover $\pi^\perp(X)$ has density proportional to $\e^{-U}$ with respect
  to the Lebesgue measure on $(\mathbb{R}u)^\perp$.
\end{lemma}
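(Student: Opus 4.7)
The strategy is to directly decompose the density of $\mu$ along the orthogonal splitting $\mathbb{R}^n = \mathbb{R}u \oplus (\mathbb{R}u)^\perp$ and observe that both $V$ and $U_W$ behave nicely under this decomposition. The key structural remark is that $U_W$ depends only on the differences $x_i-x_j$, and these differences are invariant under translations in the diagonal direction $u=\tfrac{1}{\sqrt{n}}(1,\dotsc,1)$: if we write $x = tu + y$ with $t=\langle x,u\rangle\in\mathbb{R}$ and $y=\pi^\perp(x)\in (\mathbb{R}u)^\perp$, then $x_i-x_j = y_i-y_j$ for all $i,j$, so $U_W(x) = U_W(y)$.

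Next I would use the Pythagorean identity $|x|^2 = t^2 + |y|^2$ together with the quadratic confinement assumption to split $V$ as
\[
V(x) = \frac{\rho}{2}|x|^2 = \frac{\rho}{2}t^2 + \frac{\rho}{2}|y|^2.
\]
Combining with the previous observation, the energy factorizes as
\[
U(x) = \frac{\rho}{2}t^2 + \Bigl(\frac{\rho}{2}|y|^2 + U_W(y)\Bigr).
\]
Since the Lebesgue measure on $\mathbb{R}^n$ factorizes as $\d x = \d t\,\d y$ under the orthogonal change of variables $x\mapsto(t,y)$, this immediately gives the product structure
\[
\frac{\e^{-U(x)}}{Z_\mu}\,\d x
= \frac{\e^{-\rho t^2/2}}{\sqrt{2\pi/\rho}}\,\d t
\;\otimes\; \frac{\e^{-\rho|y|^2/2 - U_W(y)}}{Z'}\,\d y,
\]
where $Z' = Z_\mu/\sqrt{2\pi/\rho}$ is the appropriate constant on $(\mathbb{R}u)^\perp$. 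This proves simultaneously that $\pi(X)$ and $\pi^\perp(X)$ are independent, that $\langle X,u\rangle\sim\mathcal{N}(0,1/\rho)$, and that $\pi^\perp(X)$ has density proportional to $\e^{-U}$ (since $\frac{\rho}{2}|y|^2 + U_W(y) = U(y)$ when $y\in u^\perp$, and more generally $U_W(y) = U_W(y+tu)$ for any $t$).

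There is really no deep obstacle here; the whole content is the translation invariance of $U_W$ in the direction $u$, which is forced by the pairwise-difference structure of the interaction, combined with the fact that among all $\rho$-convex $V$ the quadratic one is the unique choice for which $V$ itself splits orthogonally. The only minor care needed is to make sure the normalizing constant on $(\mathbb{R}u)^\perp$ is finite, which follows from $Z_\mu<\infty$ and the explicit Gaussian integral in $t$.
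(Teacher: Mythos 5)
Your proof is correct and takes essentially the same approach as the paper: the Pythagorean split $|x|^2 = \langle x,u\rangle^2 + |\pi^\perp(x)|^2$ for the quadratic confinement, and translation invariance of $U_W$ along $\mathbb{R}u$ (since the interaction depends only on pairwise differences), yielding the factorization of $\e^{-U}$. Your write-up adds a more explicit coordinate computation and the normalizing constants, but the argument is the same.
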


In the special case of the beta Hermite ensemble \eqref{eq:BHE}, the law of
$\langle X,u\rangle=\mathrm{Trace}(H)/\sqrt n$ is easily seen on the random
matrix model $H$ provided by theorems \ref{th:GUEM} and \ref{th:BHEM}.

An extension of Lemma \ref{le:factor} to higher dimensional gases in
considered is~\cite{simconstr}.

\begin{proof}[Proof of Lemma \ref{le:factor}]
  Since $x=\pi(x)+\pi^\perp(x)$ and $\pi(x)=\langle x,u\rangle u$, we have  
  \[
    |x|^2 = \langle x , u \rangle^2 + |\pi^\perp(x)|^2 .
  \]
  Besides it is easily seen that $U_W (x) = U_W ( \pi^\perp(x) )$ for all $x$, a
  property which comes from the shift invariance of the interaction energy
  $U_W$ along $\mathbb{R}u$. Therefore
  \[
    \e^{ - U (x) } = \e^{ - \rho \langle x,u \rangle ^2 / 2 } 
    \times \e^{ - \rho \vert \pi^\perp (x) \vert^2 / 2 - U_W ( \pi^\perp (x) ) } 
    = \e^{ - \rho \langle x,u \rangle ^2 / 2 } \times \e^{ - U ( \pi^\perp (x) ) } . 
  \] 
  So the density of $X$ is the product of a function of $\langle x ,u\rangle$
  by a function of $\pi^\perp(x)$. 
\end{proof}
The result extends naturally by the same proof to the more general quadratic
case $V=\langle Ax,x\rangle$ where $A$ is a symmetric positive definite $n\times n$ matrix,
provided that the diagonal direction $u$ is an eigenvector of $A$. 
\begin{remark}[Gaussian factor and orthogonal polynomials]
  Let $\mu$ be as in Lemma \ref{le:factor}. Let $H_i$ and $H_j$ be two
  distinct univariate (Hermite) orthogonal polynomials with respect to the
  standard Gaussian law $\mathcal{N}(0,I_n)$. Then it follows from Lemma
  \ref{le:factor} that the symmetric multivariate polynomials
  $H_i(\sqrt{\rho/n}(x_1+\cdots+x_n))$ and
  $H_j(\sqrt{\rho/n}(x_1+\cdots+x_n))$ are orthogonal with respect to $\mu$.
  In particular, when $\rho=n$ and with $H_i(x)=x$ and $H_j(x)=x^2-1$, we get
  that $x_1+\cdots+x_n$ and $(x_1+\cdots+x_n)^2-1$ are orthogonal for $\mu$.
\end{remark}
\section{Proofs} 
\label{se:proofs}

\subsection{Proof of Theorems~\ref{th:PI} and~\ref{th:LSI}}
\label{ss:proof:pilsi}

\begin{proof}[Proof of Theorems~\ref{th:PI} and~\ref{th:LSI}]

  Let us first mention that Theorem~\ref{th:LSI} actually implies
  Theorem~\ref{th:PI}. Indeed it is well-known that applying log-Sobolev to a
  function $f$ of the form $f=1+\epsilon h$ and letting $\epsilon$ tend to $0$
  yields the Poincaré inequality for $h$, with half the constant if the
  log-Sobolev inequality. See for instance~\cite{MR1845806} or
  \cite{MR3155209} for details.
  
  In the discussion below, we call \emph{potential} of a probability measure
  $\mu$ the function $-\log \rho$, where $\rho$ is the density of $\mu$ with
  respect to the Lebesgue measure. In view of Lemma~\ref{le:conc} it is enough
  to prove that a probability measure $\mu$ on $\R^n$ whose potential $U$ is
  $\rho$-convex for some positive $\rho$ satisfies the logarithmic Sobolev
  inequality with constant $2 / \rho$. This is actually a well-known fact. It
  can be seen in various ways which we briefly spell out now. Some of these
  arguments require extra assumptions on $U$, namely that the domain of $U$
  equals $\R^n$ (equivalently $\mu$ has full support) and that $U$ is
  $\mathcal C^2$-smooth on $\R^n$. For this reason we first explain a
  regularization procedure showing that these hypothesis can be added without
  loss of generality.

  \emph{Regularization procedure.} Let $\gamma$ be the Gaussian measure whose
  density is proportional to $\e^{ - \rho \vert x\vert^2/ 2}$ and let $f$ be
  the density of $\mu$ with respect to $\gamma$. Clearly $U$ is $\rho$-convex
  if and only if $\log f$ is concave. Next let $(Q_t)$ be the
  Ornstein--Uhlenbeck semigroup having $\gamma$ as a stationary measure,
  namely for every test function $g$
  \[
    Q_t g ( x ) = \E \left[g \left( \e^{-t} x + \sqrt{ 1-\e^{-2t} } G \right) \right] 
  \]
  where $G \sim \gamma$. Since $\gamma$ is
  reversible for $(Q_t)$ the measure $\mu Q_t$ has density $Q_t f$ with
  respect to $\gamma$. Moreover the semigroup $(Q_t)$ satisfies the following
  property
  \[
    f \text{ log-concave } \quad \Rightarrow \quad Q_t f \text{ log-concave} . 
  \]
  This is indeed an easy consequence of the Prékopa--Leindler inequality,
  see~\eqref{eq:PL} below. As a result the potential $U_t$ of $\mu Q_t$ is
  also $\rho$-convex. Besides $U_t$ is clearly $\mathcal C^\infty$ smooth on
  the whole $\R^n$. Lastly since $\lim_{t\to 0} Q_t f(x) = f(x)$ for almost
  every $x$, we have $\mu P_t \to \mu$ weakly as $t$ tends to $0$. As a
  result, if $\mu P_t$ satisfies log-Sobolev with constant $2/\rho$ for every
  $t$, then so does $\mu$.

  \emph{First proof: The Brascamp--Lieb inequality.} A theorem due to
  Brascamp and Lieb \cite{MR0450480} states that if the potential of $\mu$ is
  smooth and satisfies $\mathrm{Hess}(U)(x)>0$ for all $x\in\mathbb{R}^n$,
  then for any $\mathcal{C}^\infty$ compactly supported test function
  $f:\mathbb{R}^n\to\mathbb{R}$, we have the inequality
  \[
    \mathrm{var}_{\mu} ( f ) 
    \leq \int_{\R^n} %
    \left\langle \mathrm{Hess} (U)^{-1} \nabla f,\nabla f \right\rangle \, \d\mu. 
  \]
  If $U$ is $\rho$-convex then $\mathrm{Hess} (U)^{-1}\leq (1/\rho)I_n$ and we
  obtain
  \[
    \mathrm{var}_{\mu} ( f ) %
    \leq \frac{1}{\rho}\int_{\R^n} |\nabla f|^2 \, \d \mu.
  \]
  The extension of this inequality to all $f\in H^1(\mu)$ follows by
  truncation and regularization. Note that this method only works for
  Poincaré. The Brascamp--Lieb inequality does not seem to admit a logarithmic
  Sobolev inequality counterpart, see \cite{MR1800062} for a discussion.
  
  \emph{Second proof: Caffarelli's contraction theorem.} Again let $\gamma$ be
  the Gaussian measure on $\R^n$ whose density is proportional to
  $\e^{- \rho \vert x \vert^2 /2}$. The theorem of
  Caffarelli~\cite{MR1889232,MR1800860} states that if the potential of $\mu$
  is $\rho$-convex then the Brenier map from $\gamma$ to $\mu$ is
  $1$-Lipschitz. This easily implies that the Poincaré constant of $\mu$
  is at least as good as that of $\gamma$, namely $1/\rho$. Let us sketch the
  argument briefly. Let $T$ be the Brenier map from $\gamma$ to $\mu$ and let
  $f$ be a smooth function on $\R^n$. Using the fact that $T$ pushes forward $\gamma$ to $\mu$, 
  the Poincaré inequality for $\gamma$ and the Lipschitz property of $T$ we get
  \begin{equation}\label{eq:caffarelli}
    \begin{split}
      \mathrm{var}_\mu ( f ) = \mathrm{var}_\gamma ( f \circ T ) 
      & \leq \frac 1 \rho \int_{\R^n} \vert \nabla (f \circ T ) \vert^2 \, \d \gamma\\
      & \leq \frac 1 \rho \int_{\R^n} |\nabla f|^2 \circ T \, \d\gamma \\
      & = \frac 1 \rho \int_{\R^n} |\nabla f|^2  \, \d \mu . 
    \end{split}
  \end{equation}
  This contraction principle works just the same for log-Sobolev. 
 
  \emph{Third proof: The Bakry--Émery criterion.} Assume that $U$ is
  finite and smooth on the whole $\R^n$ and consider the Langevin diffusion
  \[
    \d X_t = \sqrt 2 \, \d B_t - \nabla U (X_t) \, \d t . 
  \]
  The generator of the diffusion is the operator
  $\mathrm{G} = \Delta - \langle \nabla U , \nabla\rangle$. The carré du
  champ $\Gamma$ and its iterated version $\Gamma_2$ are easily computed:
  \begin{equation}\label{eq:BE}
    \begin{split}
      \Gamma (f,g) & %
      = \frac 12 ( \mathrm{G} (fg) - f \mathrm{G}(g) - g \mathrm{G}(f) ) %
      = \langle \nabla f, \nabla g \rangle \\
      \Gamma_2 (f,g) & %
      = \frac 12 ( \mathrm{G} \Gamma (f,g) -  \Gamma (f,\mathrm{G}g) - \Gamma ( \mathrm{G} f, g ) \\
 & = \mathrm{Tr} \left( \mathrm{Hess} (f) \mathrm{Hess} (g) \right) 
      + \langle \mathrm{Hess} (U)\nabla f, \nabla g \rangle .
    \end{split}
  \end{equation}
  We also set $\Gamma ( f ) = \Gamma (f,f)$ and similarly for $\Gamma_2$.  
  The hypothesis that $U$ is $\rho$-convex thus implies that 
  \[
    \Gamma_2 (f) \geq \rho \Gamma (f) ,  
  \] 
  for every suitable $f$. 
  Actually this inequality is equivalent to the condition that $U$ is $\rho$-convex, 
  as can be seen by plugging in linear functions. 
  In the language of Bakry--Émery, see \cite{MR889476,MR1845806,MR3155209},
  the diffusion satisfies the curvature dimension criterion
  $\mathrm{CD} ( \rho , \infty)$. This criterion implies that the stationary
  measure $\mu$ satisfies the following logarithmic Sobolev inequality
  \[
    \mathrm{ent}_\mu ( f^2 ) \leq \frac 2\rho \int_{\R^n} \Gamma (f) \, d\mu , 
  \]
  see~\cite[Proposition~5.7.1]{MR3155209}. Formally this proof also works if
  $\mu$ does not have full support by adding a reflection at the boundary,
  just as in section~\ref{se:dynamics}. However this poses some technical
  issues which are not always easy to overcome. As a matter of fact,
  diffusions with reflecting boundary conditions are not treated in the
  book~\cite{MR3155209}.
  
  \emph{Fourth proof: An argument of Bobkov and Ledoux.} This fourth proof is
  the one that requires the least background. Another nice feature is that the
  regularization procedure is not needed for this proof. It is based on the
  Prékopa-Leindler inequality. The latter, which is a functional form of the
  Brunn--Minkowski inequality, states that if $f,g,h$ are functions on
  $\R^n$ satisfying
  \[
    (1-t) f ( x) + tg(y) \leq h((1-t)x+ty) 
  \]
  for every $x,y\in \R^n$ and for some $t\in [0,1]$, then 
  \begin{equation}\label{eq:PL}
    \left( \int \e^f \, \d x \right)^{1-t} \left( \int \e^g \, \d x\right)^t 
    \leq \int \e^h \, \d x  .  
  \end{equation}
  We refer to~\cite{MR1491097} for a nice presentation of this inequality. 
  Let $F\colon \R^n \to \R$ be a smooth 
  function with compact support, and for $s >0$ 
  let $R_s F$ be the infimum convolution 
  \[ 
    R_s F (x) = \inf_{y\in \R^n}\left\{F(x+y) + \frac 1{2s} \vert y \vert^2\right\}.
  \]
  Fix $t \in (0,1)$. Using the $\rho$-convexity of $U$:
  \[
    (1-t) U (x) + t U(y) \leq U ( (1-t)x+ty ) - \frac { \rho t(1-t) } 2 \vert x-y \vert^2 , 
  \]
  it is easily seen that the functions 
  $f = R_{t/\rho} F - U$, $g=-U$ and $h = (1-t) F-U$ 
  satisfy the hypothesis of the Prékopa-Leindler inequality.  
  The conclusion~\eqref{eq:PL} rewrites in this case 
  \begin{equation}\label{eq:bobkovledoux}
    \left( \int_{\R^n} \e^{ R_{ t / \rho } F} \, \d\mu \right)^{1-t} %
    \leq \int_{\R^n} \e^{ (1-t) F } \, \d \mu .  
  \end{equation}
  It is well-known that $(R_s)$ solves the Hamilton--Jacobi equation
  \[
    \partial R_s F  + \frac 12 \vert \nabla R_s F \vert^2 = 0,    
  \]
  see for instance \cite{MR2597943}. Using this and differentiating the
  inequality~\eqref{eq:bobkovledoux} at $t=0$ yields
  \[
    \mathrm{ent}_\mu ( \e^F )
    \leq \frac 1 {2\rho} \int_{\R^n} \vert \nabla F \vert^2 \e^F \, \d\mu
  \]
  which is equivalent to the desired log-Sobolev inequality. We refer to
  the article~\cite{MR1800062} for more details.
\end{proof}

\subsection{Proof of Theorem~\ref{th:opt} and comments on optimality}
\label{ss:proof:opt}

In the case of the beta ensemble, Theorem~\ref{th:lassalle:generator} shows
that $x\in\mathbb{R}^n\mapsto x_1+\cdots+x_n$ is the only symmetric function
optimal in the Poincaré inequality, up to additive and multiplicative
constants. Our goal now is to study the optimality far beyond this special case.

We have just seen that if a measure $\mu$ has density $\mathrm{e}^{-\phi}$ where
$\phi$ is $\rho$-convex for some $\rho>0$ then it satisfies Poincar\'e with
constant $1/\rho$. This constant is sharp in the case where $\mu$ is the
Gaussian measure whose density is proportional to
$\e^{-\rho \vert x \vert^2 / 2 }$. Indeed the Poincaré constant of that
Gaussian measure is equal to $1/\rho$ and extremal functions are affine
functions, see for instance~\cite{MR1845806,MR3155209}. Similarly, its
log-Sobolev constant is $2/\rho$ and extremal functions are log-affine
functions, see~\cite{MR1132315}. The next lemma asserts that conversely, if
the Poincaré constant of $\mu$ or its log-Sobolev constant matches the bound
predicted by the strict convexity of its potential, then $\mu$ must be
Gaussian in some direction.

Recall the notion of having a Gaussian factor in a given direction, used in
Lemma \ref{le:factor}.
\begin{lemma}\label{le:courtade}
  Let $\mu$ be a probability measure on $\R^n$ with density
  $\mathrm{e}^{-\phi}$ where $\phi$ is $\rho$-convex for some $\rho>0$, and
  assume that there exists a non constant function $f$ such that
  \begin{equation}\label{eq:courtade}
    \mathrm{var}_\mu ( f )
    = \frac 1 \rho \int_{\R^n} \vert \nabla f \vert^2 \,\d\mu .  
  \end{equation}
  Then the following properties hold true: 
  \begin{itemize}
  \item[(i)] The function $f$ is affine: there exists a vector $u$ 
    and a constant $b$ such that
    \[
      f(x) = \langle u ,x\rangle + b.
    \]
  \item[(ii)] The measure $\mu$ has a Gaussian factor of variance $1/\rho$ in the 
    direction $u$.
  \end{itemize}
  Besides, there is a similar statement for the log-Sobolev inequality: if there 
  exists a non constant function $f$ such that 
  \[
    \mathrm{ent}_\mu ( f^2 )
    = \frac 2{\rho}  \int_{\R^n} \vert \nabla f \vert^2 \,\d\mu , 
  \]
  then $\log f$ is affine and $\mu$ has a Gaussian factor in the corresponding direction. 
\end{lemma}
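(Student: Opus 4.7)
The plan is to reprove the Poincaré inequality by the Bakry--Émery semigroup method and to read off exactly what equality imposes. With $\mathrm L=\Delta-\langle\nabla\phi,\nabla\rangle$ and its semigroup $(P_t)$, the identity
\[
\mathrm{var}_\mu(f)=2\int_0^\infty\!\!\!\int\Gamma(P_tf)\,\d\mu\,\d t,
\]
together with the pointwise Bakry--Émery bound $-\frac{\d}{\d t}\!\int\Gamma(P_tf)\,\d\mu=2\!\int\Gamma_2(P_tf)\,\d\mu\geq 2\rho\!\int\Gamma(P_tf)\,\d\mu$, yields Poincaré with constant $1/\rho$, and equality in~\eqref{eq:courtade} forces saturation at $t=0$:
\[
\int\Gamma_2(f)\,\d\mu=\rho\int\Gamma(f)\,\d\mu.
\]
Since Bochner's formula writes $\Gamma_2(f)-\rho\Gamma(f)=\|\mathrm{Hess}(f)\|_{\mathrm{HS}}^2+\langle(\mathrm{Hess}(\phi)-\rho I_n)\nabla f,\nabla f\rangle$ as a sum of two pointwise nonnegative terms, both must vanish $\mu$-a.e.

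The first vanishing forces $\mathrm{Hess}(f)=0$ on the interior of $\mathrm{supp}(\mu)$, which is convex as the domain $\{\phi<+\infty\}$, so $f(x)=\langle u,x\rangle+b$ there, yielding (i). Then $\nabla f\equiv u$, so the second vanishing reads $\langle(\mathrm{Hess}(\phi)-\rho I_n)u,u\rangle=0$ $\mu$-a.e.; positive semidefiniteness of $\mathrm{Hess}(\phi)-\rho I_n$ then forces $\mathrm{Hess}(\phi)\,u=\rho\,u$ pointwise. Setting $\hat u=u/|u|$ and writing $x=s\hat u+y$ with $y\in\hat u^\perp$, this encodes $\partial^2_{\hat u}\phi=\rho$ and $\partial_{\hat u}\nabla_y\phi=0$, so $\partial_{\hat u}\phi=\rho s+c$ is independent of $y$; integrating gives
\[
\phi(s\hat u+y)=\tfrac{\rho}{2}s^2+cs+\psi(y)
\]
for some $\psi$ on $\hat u^\perp$. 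Hence $\e^{-\phi}$ factorizes as a Gaussian of variance $1/\rho$ in the direction $\hat u$ (shifted by $-c/\rho$) times $\e^{-\psi(y)}$, which is precisely (ii). The logarithmic Sobolev case runs in parallel: equality propagates along the exponential decay of Fisher information under $\mathrm{CD}(\rho,\infty)$, producing the integrated Bochner identity $\int\Gamma_2(\log f)\,f^2\,\d\mu=\rho\!\int\Gamma(\log f)\,f^2\,\d\mu$, so $\mathrm{Hess}(\log f)=0$ and $\mathrm{Hess}(\phi)\nabla\log f=\rho\nabla\log f$, whence $\log f$ is affine and the same factorization delivers the Gaussian factor in the corresponding direction.

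The main obstacle will be regularity: $\phi$ is only convex (so $\mathrm{Hess}(\phi)$ exists merely a.e.\ by Alexandrov's theorem) and $f\in H^1(\mu)$ only, so Bochner's pointwise identity is not directly legitimate. The plan is to regularize $\mu$ into $\mu Q_s$ as in the proof of Theorems~\ref{th:PI} and~\ref{th:LSI}, whose potential is $\mathcal C^\infty$ and $\rho$-convex on all of $\R^n$, to run the semigroup argument there, and to transfer the rigidity back to $\mu$ by sending $s\to 0$, using that a Gaussian factor is preserved by the Ornstein--Uhlenbeck flow and that the Poincaré (resp.\ log-Sobolev) deficit is lower semicontinuous under the relevant convergences. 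A cleaner alternative is to track equality in the Caffarelli chain~\eqref{eq:caffarelli}: equality forces $f\circ T$ affine by Gaussian rigidity and $|DT^\top\nabla f(T)|=|\nabla f(T)|$ pointwise, and since $0\preceq DT\preceq I_n$ this pins $\nabla f(T)$ to lie in the eigenspace of $DT$ associated with eigenvalue $1$; the Brenier potential $\varphi$ for which $T=\nabla\varphi$ then satisfies $\partial_{\hat u}^2\varphi\equiv 1$ and $\partial_{\hat u}\nabla_y\varphi\equiv 0$, so $\varphi(s\hat u+z)=s^2/2+cs+\psi(z)$ splits, and with it the map $T$ and the law of $X=T(W)$, producing the Gaussian factor directly.
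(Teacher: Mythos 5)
Your closing ``cleaner alternative'' is precisely the paper's proof: analyze equality through the Caffarelli chain~\eqref{eq:caffarelli}, get that $f\circ T$ is affine by the Gaussian Poincar\'e rigidity, get that $\nabla f(Tx)$ is a fixed point of the symmetric contraction $(\d T)_x$ from $0\preceq\d T\preceq I_n$, and then conclude that $f$ itself is affine and that the Brenier potential splits in the direction $u$, so that the pushforward $\mu$ is a product of a one-dimensional Gaussian by something orthogonal. The paper takes this route exactly because it bypasses the regularity difficulty you flag: Caffarelli gives $T$ Lipschitz, hence $\d T$ exists a.e.\ by Rademacher, and one never needs $\mathrm{Hess}(\phi)$ or $\mathrm{Hess}(f)$ to make pointwise sense.

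Your main development, the Bakry--\'Emery calculus, is what the paper offers separately as Remark~\ref{rm:optimality}, with the explicit caveat that it ``requires to justify rigorously the computations via delicate handling of regularity and smoothing.'' Your proposed regularization does not close that gap: knowing $\mathrm{var}_\mu(f)=\frac{1}{\rho}\int|\nabla f|^2\,\d\mu$ tells you nothing about the deficit of the smoothed measure $\mu Q_s$. Lower semicontinuity of the deficit only gives $0=\mathrm{deficit}(\mu)\leq\liminf_{s\to 0}\mathrm{deficit}(\mu Q_s)$, which is the wrong direction, so you cannot run the equality analysis on $\mu Q_s$ and pass to the limit; making this route airtight would require the \emph{quantitative} stability estimates of de~Philippis--Figalli and Courtade--Fathi cited in the paper, not mere rigidity for smooth potentials. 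The algebra from $\int\Gamma_2(f)\,\d\mu=\rho\int\Gamma(f)\,\d\mu$ to $\mathrm{Hess}(f)=0$, then $\mathrm{Hess}(\phi)\nabla f=\rho\nabla f$, then the splitting of $\phi$ as in Lemma~\ref{le:factor}, is the correct heuristic and matches Remark~\ref{rm:optimality}; the bottleneck is its justification, not the identity. For a self-contained proof, carry out the Caffarelli route you sketch at the end, which is what the paper does.
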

The Poincar\'e case is contained in the main result 
of Cheng and Zhou's article~\cite{MR3575913}. 
The general result is a consequence of the works of de Philippis and
Figalli~\cite{MR3614644}, and also Courtade and Fathi~\cite{CF}.
These authors actually
establish a stability estimate for this lemma: if there exists a function $f$
which is near optimal in Poincar\'e then $\mu$ nearly has a Gaussian factor.
We sketch a proof of Lemma~\ref{le:courtade} based on their ideas. 
\begin{proof}[Proof of Lemma \ref{le:courtade}]
  We analyze the equality case in the third proof of the main theorem, the one
  based on Caffarelli's contraction theorem. Recall that the Brenier map $T$
  from $\gamma$ to $\mu$ is the gradient of a convex function and that it
  pushes forward $\gamma$ to $\mu$. Recall also that Caffarelli's theorem
  asserts that under the hypothesis of the lemma $T$ is $1$-Lipshitz.
  Therefore $T$ is differentiable almost everywhere, and its differential is a
  symmetric matrix satisfying
  \begin{equation}\label{eq:caffa}
    0 \leq (\d T)_x \leq I_n 
  \end{equation}
  as quadratic forms. Now, observe that if $f$ satisfies~\eqref{eq:courtade} then
  every inequality in~\eqref{eq:caffarelli} must actually be an equality. In
  particular $f \circ T$ must be optimal in the Poincaré inequality for
  $\gamma$. This implies that $f\circ T$ is affine. 
  Also there is equality in the inequality
  \[
    \vert \nabla (f \circ T) (x) ) \vert^2  \leq \vert \nabla f (Tx) \vert^2  
  \]
  for almost every $x$. Because of~\eqref{eq:caffa} this actually implies that 
  \[
    (\d T)_x ( \nabla f ( Tx ) ) = \nabla f ( Tx ) ,  
  \]
  for almost every $x$. Since $f\circ T$ is affine the left hand side is constant, 
  and we obtain that $f$ itself must be affine. 
  Thus, there exists a
  vector $u$ and a constant $b$ such that $f(x) = \langle u,x\rangle + b$, and
  moreover $(\d T)_x (u) = u$ for almost every $x$. By a change of variable,
  we can assume that $u$ is a multiple of the first coordinate vector. The
  differential of $T$ at $x$ thus has the form
  \[
    (\d T)_x =
    \left( 
      \begin{matrix} 
        1 & 0 \\ 0 & * 
      \end{matrix} 
    \right)  
  \]
  for almost every $x$. Therefore 
  \[
    T(x_1,\dotsc,x_n)  = ( x_1 + a , S ( x_2 , \dotsc , x_n) )
  \]
  for some constant $a$ and some map $S$ from $\R^{n-1}$ to itself. This
  implies that the image $\mu$ of $\gamma$ by $T$ is a product measure, and
  that the first factor is the Gaussian measure with mean $a$ and variance
  $1/\rho$. This finishes the proof of the first part of the lemma. The
  log-Sobolev version can be obtained very similarly and we omit the details.
\end{proof}

\begin{remark}[Alternate proof based on the Bakry--Émery calculus]\label{rm:optimality} %
  We spell out briefly an alternative proof of Lemma~\ref{le:courtade} based
  on the Bakry--Émery calculus starting from a work by Ledoux
  \cite{MR1160084}. Let $\mathrm{G}$, $\Gamma$, and $\Gamma_2$ be as in
  \eqref{eq:BE}, and let ${(P_t)}_{t\geq0}$ be the Markov semigroup generated
  by $\mathrm{G}$. The usual Bakry--Émery method gives, up to regularity
  considerations,
  \[
    \mathrm{var}_\mu(f) = \frac{1}{\rho}\int\Gamma f\d\mu
    -\frac{2}{\rho}\int_0^\infty
    \left(\int(\Gamma_2-\rho\Gamma)(P_tf)\d\mu\right)\d t . 
  \]
  This is a Taylor--Lagrange formula expressing the ``deficit'' in the
  Poincaré inequality. It shows that if $\Gamma_2 \geq \rho \Gamma$ and
  $\mathrm{var}_\mu ( f ) = \frac{1}{\rho}\int\Gamma f\d\mu$ then
  $(\Gamma_2-\rho\Gamma)(P_tf)(x)=0$ almost everywhere in $t$ and $x$. Up to
  regularity issues, we get in particular
  \[
    (\Gamma_2-\rho\Gamma)(f)=0.
  \]
  In other words, denoting $\left\Vert\cdot\right\Vert_{\mathrm{HS}}$ the
  Hilbert--Schmidt norm,
  \[
    \Vert \mathrm{Hess} (f) \Vert_{\mathrm{HS}}^2 + \langle \left( \mathrm{Hess}
      (U)- \rho I_n \right) \nabla f , \nabla f \rangle = 0 .
  \]
  Since both term are non negative this actually implies that 
  \[
  \mathrm{Hess} (f) = 0 \quad \text{and} \quad \langle \left( \mathrm{Hess} (U) - \rho I_n \right) \nabla f , \nabla f \rangle  = 0 . 
  \]
  Thus $f$ is affine: there exists a unit vector $u$ and two constants
  $\lambda$ and $c$ such that
  \[
    f(x) = \lambda\langle u,x \rangle + c  ,
  \] 
  and moreover $\langle \mathrm{Hess} (U)  u , u \rangle  = \rho$ .
  Since $\mathrm{Hess} (U) \geq \rho I_n$ this actually implies that
  \[
  \mathrm{Hess} (U) u = \rho\,u 
  \]
  pointwise. Proceeding
  as in the proof of Lemma \ref{le:courtade}, we then see that $\mu$ has a Gaussian factor
  of variance $1/\rho$ in the direction $u$. There is a similar argument for
  the log-Sobolev inequality using
  \[
    \mathrm{ent}_\mu(f)
    =\frac{1}{2\rho}\int \Gamma ( \log f ) \, f \,\d \mu
    -\frac{1}{\rho}\int_0^\infty
    \left(\int(\Gamma_2-\rho\Gamma)(\log P_tf) \, P_t f \, \d\mu \right) \,\d t.
  \]
  This leads to the fact that if $f$ is optimal in the logarithmic Sobolev
  inequality then $f$ is of the form $f(x) = \e^{\lambda\langle u,x\rangle+c}$
  and $\mu$ has a Gaussian factor in the direction $u$. As usual, this
  seductive approach requires to justify rigorously the computations via
  delicate handling of regularity and smoothing, see for instance
  \cite{MR3155209}, \cite{MR1842428}, and \cite[Section 4.4.2]{MR2760897}.
\end{remark}

\begin{proof}[Proof of Theorem \ref{th:opt}]
  According to Lemma \ref{le:factor}, if
  $V=\rho\left\vert\cdot\right\vert^2/2$ for some $\rho>0$ then $\mu$ has a
  Gaussian factor in the diagonal direction $u= (1,\dotsc,1) / \sqrt n$. As we
  have seen, this Gaussian satisfies Poincaré with constant $1/\rho$ and
  log-Sobolev with constant $2/\rho$. Moreover, affine functions are optimal
  in Poincaré and log-affine functions are optimal in log Sobolev. This shows
  that we have equality in the Poincar\'e inequality of Theorem~\ref{th:PI}
  for functions $f$ of the form $f (x) = \lambda( x_1 + \dotsb + x_n ) + c$
  for some constants $\lambda$ and $c$, and equality in the logarithmic
  Sobolev inequality of Theorem~\ref{th:LSI} for functions whose logarithm is
  of the preceding type.

  Let us now prove that these are the only optimal functions. Assume that $f$
  is non constant and extremal in the Poincar\'e inequality. Then by
  Lemma~\ref{le:courtade}, there exists a vector $v$ and a constant $b$ such
  that $f(x) = \langle v , x \rangle + b$, and moreover $\mu$ has a Gaussian
  factor in the direction $v$. Since the support of $\mu$ is the set
  $\{ x_1 \geq \dotsb \geq x_n \}$ this can only happen if $v$ is proportional
  to the diagonal direction $u$, which is the result. The proof for
  log-Sobolev is similar.
\end{proof}

\subsection{Proof of Corollary \ref{co:conc} and comments on concentration}
\label{ss:proof:conc}

\begin{proof}[Proof of Corollary \ref{co:conc}]
  The Gaussian concentration can be deduced from the log-Sobolev inequality
  via an argument due to Herbst, see for instance \cite{MR1849347}, which
  consists in using log-Sobolev with $f=\e^F$ to get the Gaussian upper bound
  on the Laplace transform
  \begin{equation}\label{eq:lapgau}
    \int \e^F \, \d\mu %
    \leq \exp \left( \int F \, \d\mu + \frac{\Vert F \Vert_\mathrm{Lip}^2}{2\rho}
    \right), 
  \end{equation}
  which leads in turn to the concentration inequality \eqref{eq:conc} via the
  Markov inequality. Alternatively we can use the intermediate
  inequality~\eqref{eq:bobkovledoux} obtained in the course of the fourth
  proof of Theorem~\ref{th:LSI}. Indeed applying Jensen's inequality to the
  right-hand side of~\eqref{eq:bobkovledoux} and letting $t\to 1$, we obtain
  \begin{equation}\label{eq:lapgau2}
    \int \e^{ R_{1/\rho} F}  \, \d\mu \leq \exp \left( \int F \, \d\mu \right) .  
  \end{equation}
  Moreover, if $F$ is Lipschitz it is easily seen that 
  \[
    R_{1/\rho} F \geq F - \frac 1{2\rho} \Vert F \Vert_\mathrm{Lip}^2 . 
  \]
  Plugging this into the previous inequality yields \eqref{eq:lapgau}. Note
  that a result due to Bobkov and Götze states that \eqref{eq:lapgau2} 
  is equivalent to a Talagrand $\mathrm{W}_2$
  transportation inequality for $\mu$, see for instance \cite{MR1849347} and
  references therein.
  
  In the case $F(x_1,\ldots,x_n)=\frac{1}{n}\sum_{i=1}^nf(x_i)=L_n(f)(x)$ we have
  \[
    \LIP{F}\leq\frac{\LIP{f}}{\sqrt{n}}
  \]
  so that \eqref{eq:conclinstat} follows from \eqref{eq:conc}.
  
  Finally taking $F(x_1,\ldots,x_n)=\max(x_1,\ldots,x_n)$ ($=x_1$ on $D$) in
  \eqref{eq:conc} gives \eqref{eq:concmax}.
\end{proof}

\begin{remark}[Concentration of measure in transportation
  distance]\label{rk:gozlan}
  Following Gozlan \cite{gozlan}, it is possible to obtain concentration of
  measure inequalities in Kantorovich--Wasserstein distance $\mathrm{W}_2$
  from the Hoffman--Wielandt inequality.
  Namely, given a Hermitian matrix $A$, we let $x_1(A)\geq\cdots\geq x_n(A)$
  be the eigenvalues of $A$, arranged in decreasing order, and
  \[
    L_A=\frac{1}{n}\sum_{i=1}^n\delta_{x_i(A)}
  \]
  be the corresponding empirical measure. If $B$ is another Hermitian matrix,
  we get from the Hoffman--Wielandt inequality
  \begin{equation}\label{eq:HWW}
    n\mathrm{W}_2(L_A,L_B)^2=\sum_{i=1}^n(x_i(A)-x_i(B))^2
    \leq\mathrm{Trace}((A-B)^2)=\left\Vert A-B\right\Vert_{\mathrm{HS}}^2.
  \end{equation}
  Thanks to the triangle inequality for $\mathrm{W}_2$, this implies that for
  every probability measure $\nu$ on $\R$ with finite second moment, the
  Lipschitz constant of the map $A\mapsto\mathrm{W}_2(L_A,\mu)$ with respect
  to the Hilbert--Schmidt norm is at most $1/\sqrt{n}$. If $G$ is a Gaussian
  matrix with density proportional to $\e^{-n\mathrm{Trace}(X^2)}$, the Gaussian
  concentration inequality then yields
  \[   
    \mathbb P \left( \left\vert \mathrm{W}_2(L_G,\nu)-\mathbb{E}\mathrm{W}_2(L_G,\nu) \right\vert > r \right)
    \leq2\e^{-\frac{n^2}{2}r^2}.
  \]
  Note that $\nu$ is arbitrary. This inequality can be reformulated as
  follows: If $\mu$ is the Gaussian unitary ensemble in $\R^n$ and $L$ is the
  map $x\in\R^n \mapsto \frac 1n \sum_{i\leq n} \delta_{x_i}$ then for any
  probability measure $\nu$ on $\R$ we have
\[
  \mu \left( \left\vert \mathrm{W}_2(L,\nu)- \int \mathrm{W}_2(L,\nu) \, d\mu  \right\vert > r \right)
  \leq2\e^{-\frac{n^2}{2}r^2}. 
\]
More generally this inequality remains valid when $\mu$ is the law of the eignevalues
  of a random matrix satisfying Gaussian concentration with rate $n$. This is the case for instance if
  the matrix has independent entries satisfying a logarithmic Sobolev
  inequality with constant $1/n$.
\end{remark}

\begin{remark}[Proof for GUE/GOE via Hoffmann--Wielandt inequality]
  For the GUE and the GOE one can give a fifth proof, based on the contraction
  principle, like the proof using Caffarelli's theorem above. The
  Hoffman--Wielandt inequality \cite{MR0052379,MR2978290,MR1477662},
  states that for all $n\times n$ Hermitian matrices $A$ and $B$ with ordered
  eigenvalues $x_1(A)\geq\cdots\geq x_n(A)$ and $x_1(B)\geq\cdots\geq x_n(B)$
  respectively, we have
  \[
    \sum_{i=1}^n(x_i(A)-x_i(B))^2\leq\sum_{i,j=1}^n|A_{ij}-B_{ij}|^2.
  \]
  In other words the map which associates to an $n\times n$ Hermitian matrix
  $A$ its vector of eigenvalues $(x_1(A),\ldots,x_n(A))\in\mathbb{R}^n$ is
  $1$-Lipschitz for the Euclidean structure on $n\times n$ Hermitian matrices,
  given by $\langle A,B\rangle=\mathrm{Trace}(AB)$. On the other hand, as we
  saw in section~\ref{se:GUE}, the Gaussian unitary ensemble is the image by
  this map of the Gaussian measure on $\mathbb H_n$ whose density is
  proportional to $\e^{ -(n/2)\mathrm{Trace}( H^2 )}$. The Poincaré constant
  of this Gaussian measure is $1/n$ so by the contraction principle the
  Poincaré constant of the GUE is $1/n$ at most. The argument works similarly
  for log-Sobolev and for the GOE.
\end{remark}

\subsection{Proof of Theorem \ref{th:equilib}}
\label{ss:proof:equilib}

\begin{proof}[Proof of Theorem~\ref{th:equilib}]
  The exponential decay of relative entropy~\eqref{eq:relent} is a well-known
  consequence of the logarithmic Sobolev inequality, see for
  instance~\cite[Theorem~5.2.1]{MR3155209}. The decay in Wasserstein distance
  follows from the Bakry--Émery machinery, see
  \cite[Theorem~9.7.2]{MR3155209}. Alternatively it can be seen using parallel
  coupling. We explain this argument briefly.

  Let $X$ and $Y$ be two solutions
  of the stochastic differential equation \eqref{eq:SDE} driven by the same
  Brownian motion:
  \[
    \begin{split}
      \d X_t & = \sqrt{2} \, \d B_t - \nabla U (X_t) \, \d t + \d \Phi_t \\
      \d Y_t & =  \sqrt{2} \, \d B_t - \nabla U (Y_t) \, \d t + \d \Psi_t , 
    \end{split}
  \]
  where $\Phi$ and $\Psi$ are the reflections at the boundary of the Weyl
  chamber of $X$ and $Y$ respectively, see section~\ref{se:dynamics} for a
  precise definition. Assume additionally that $X_0 \sim \nu_0$,
  $Y_0 \sim \nu_1$ and that
  \[
    \E(\vert X_0 - Y_0 \vert^p) = \mathrm{W}_p ( \nu_0,\nu_1)^p .  
  \]  
  Observe that
  \[
    \d \vert X_t - Y_t \vert^2 
    = - 2\langle X_t - Y_t , \nabla U ( X_t ) - \nabla U (Y_t) \rangle \, \d t 
    +2\langle X_t-Y_t,\d\Phi_t\rangle + 2\langle Y_t - X_t , \d \Psi_t \rangle. 
  \]
  Since $U$ is $\rho$-convex
  $\langle X_t - Y_t , \nabla U ( X_t ) - \nabla U (Y_t ) \rangle \geq \rho
  \vert X_t - Y_t \vert^2$. Besides $\d\Phi_t = -\mathrm n_t \d L_t$ where $L$
  is the local time of $X$ at the boundary of the Weyl chamber $D$ and
  $\mathrm n_t$ is an outer unit normal at $X_t$. Since $Y_t\in D$ and since
  $D$ is convex we get in particular
  $\langle X_t - Y_t , \d \Phi_t \rangle \leq 0$ for all $t$, and similarly
  $\langle Y_t - X_t , \d \Psi_t \rangle \leq 0$. Thus
  $\d\vert X_t - Y_t \vert^2 \leq - 2\rho\vert X_t - Y_t \vert^2 \, \d t$,
  hence
  \[
    \vert X_t - Y_t \vert \leq \e^{ -\rho t } \vert X_0 - Y_0 \vert . 
  \] 
  Taking the $p$-th power and expectation we get, in $[0,+\infty]$,
  \[
    \E[\vert X_t - Y_t \vert^p ]^{1/p} %
    \leq \e^{-\rho t} \, \E(\vert X_0 - Y_0 \vert^p)^{1/p} %
    = \e^{-\rho t} \mathrm{W}_p( \nu_0 , \nu_1 ).
  \]  
  Moreover since $X_t\sim \nu_0 P_t$ and $Y_t \sim \nu_1 P_t$ we have by
  definition of $\mathrm{W}_p$
  \[
    \mathrm{W}_p( \nu_0 P_t , \nu_1 P_t ) %
    \leq  \E[\vert X_t - Y_t \vert^p ]^{1/p} . 
  \]
  Hence the result.
\end{proof}

\bigskip
 
\textbf{Acknowledgments.} This work is linked with the French research project
ANR-17-CE40-0030 - EFI - Entropy, flows, inequalities. A significant part was
carried out during a stay at the Institute for Computational and Experimental
Research in Mathematics (ICERM), during the 2018 Semester Program on ``Point
Configurations in Geometry, Physics and Computer Science'', thanks to the kind
invitation by Edward Saff and Sylvia Serfaty. We thank also Sergio Andraus,
François Bolley, Nizar Demni, Peter Forrester, Nathaël Gozlan, Michel Ledoux,
Mylène Maïda, and Elizabeth Meckes for useful discussions and some help to
locate references. We would also like to thank the anonymous referee for his 
careful reading of the manuscript and his suggestion to use~\cite{MR3575913,MR3614644,CF}
to prove Lemma~\ref{le:courtade} and Theorem~\ref{th:opt}. 

\bigskip

{\footnotesize This note takes its roots in the blog post \cite{chafai-blog}.}

\bibliographystyle{smfplain}
\bibliography{guefi}

\end{document}